\newcommand{\assign}{:=}
\newcommand{\dueto}[1]{\textup{\textbf{(#1) }}}
\newcommand{\nin}{\not\in}
\newcommand{\nocomma}{}
\newcommand{\noplus}{}
\newcommand{\nosymbol}{}
\newenvironment{itemizedot}{\begin{itemize} 
}{\end{itemize}}
\newtheorem{thrm}{Theorem}[section]
\newtheorem{lem}[thrm]{Lemma}
\newtheorem{prop}[thrm]{Proposition}
\theoremstyle{definition}
\newtheorem{definition}[thrm]{Definition}
\numberwithin{equation}{section}
\title{On the stationary solutions of Doi-Onsager model in general dimension}
\author{Mohammad Niksirat}
\address{Department of Mathematics, University of Toronto, Toronto, Canada, M5S 2E4}
\email{niksirat@math.toronto.edu}
\keywords{liquid crystal, Onsager model, phase transition, topological methods, degree theory}
\subjclass{Primary 76A15, Secondary 35Q35}
\begin{document}

\begin{abstract}
  We give new results of the phase transition of dilute colloidal solutions of
  rod-like molecules in dimension $D \geq 3$. For the low concentration of
  particles in a carrier fluid, we prove that the isotropic phase is the unique solution to the
  Doi-Onsager model with the general potential kernel. In addition, we present the regime of the bifurcation 
  of nematic phases in the class of axially symmetric solutions. Our method is based on a generalization of 
  the classical Leray-Schauder degree we developed for this problem.
\end{abstract}

\maketitle
\section{Introduction}

In 1949, L. Onsager \cite{Onsager1949} proposed a mathematical model for
the phase transition of dilute colloidal solutions of rod-like molecules. As
the fluid in both phases is homogeneous, Onsager's theory focuses on a
probability density function $f (r)$ over the unit sphere $S^2 \in
\mathbb{R}^3$. Let $f (r) : S^2 \rightarrow [0, \infty)$ be the probability
density function of the directions of the rod-like molecules, that is, for any
$A \subset S^2$
\begin{equation}
  P ( \text{\rm {the rod is along}} \hspace{0.25em} r
  \in A) = \int_A f (r) d \sigma .
\end{equation}
As we are modeling rod-like molecules with no distinction between the two
ends, we can further assume $f (r) = f (- r)$. Consequently, the constraints
on $f (r)$ are
\begin{equation}
  \label{ff} f (r) \geq 0, \hspace{0.4cm} f (r) = f (- r), \hspace{0.4cm}
  \int_{S^2} f (r) d \sigma = 1.
\end{equation}
The mean interaction potential between molecules is expressed by
\begin{equation}
  \label{U2} U (f) (r) = \lambda \int_{S^2} K (r, r') f (r') d \sigma (r'),
\end{equation}
where $\lambda$ can be interpreted as either the concentration of the
particles in the carrier fluid or equivalently the inverse of the absolute
temperature, and the potential kernel $K$ is defined by Onsager as
\begin{equation}
  \label{Onsager-K} K (r, r') = |r \times r' | .
\end{equation}
With this interaction field, Onsager suggested that the possible phases of a
liquid crystal are the critical points of the following energy functional
$\mathcal{E}$:
\begin{equation}
  \label{free-energy} \mathcal{E} (f) = \int_{S^2} f (r)  \left( \log \nocomma
  f (r) + \frac{1}{2} U (f) (r) \right) d \sigma .
\end{equation}
By the classical variational method, it is simply seen that the density
function $f$ is a minimizer of (\ref{free-energy}) if the functional
\begin{equation}
  V (f) \assign \log f + U (f),
\end{equation}
is constant. By (\ref{ff}), this is in turn equivalent to the equation
\begin{equation}
  \label{f2} f (r) = \left( \int_{S^2} e^{- U (f)} d \sigma \right)^{- 1} e^{-
  U (f) (r)} .
\end{equation}
On the other hand, according to the relation
\begin{equation}
  \Delta_r f + {\rm div} (f \nabla_r U (f)) = {\rm div} (f \nabla_r V),
\end{equation}
it is simply seen that the solutions of (\ref{f2}) are the steady state
solutions of the Doi equation:
\begin{equation}
  \label{dyn} \frac{\partial f}{\partial t} = \Delta_r f +  {\rm div}
  (f \nabla_r U (f)) .
\end{equation}
Apparently, (\ref{f2}) has trivial solutions $\bar{f} = \frac{1}{vol (S^2)}$
that are correspond to the uniform distribution of molecules without any
preferred directional order. This is called an isotropic phase of the fluid.
Approximating $K$ by some lower orders terms, Onsager was able to show a
transition to a non-uniform state called nematic phases in the case when
$\lambda$ passes a critical value.

More quantitative analysis of the system $\left( \ref{U2} \right), \left(
\ref{f2} \right)$ with the Onsager kernel turned out to be difficult. On the
other hand there are kernels capturing the qualitative behavior of the
solution that are more friendly to mathematical analysis. One such kernel, due
to Maier and Saupe, reads
\begin{equation}
  \label{K-MS} K (r, r') = |r.r' |^2 - {\rm constant},
\end{equation}
that is usually written as $K (r, r') = \cos^2 \gamma$ if the constant is
discarded. The advantage in considering (\ref{K-MS}) instead of the Onsager
kernel (\ref{Onsager-K}) is that the Maier-Saupe kernel is the eigenvector of
the Laplace-Beltrami operator on $S^2$ and then lies in a finite dimensional
space \cite{Chen2010}. This reduces the infinite dimensional problem
(\ref{U2})-(\ref{f2}) to a finite dimensional nonlinear system
of equations. This reduced system, still highly nontrivial, is nevertheless
more tractable than the original system. As a consequence, (\ref{U2})-(\ref{f2}) with the Maier-Saupe kernel has been well understood through
brilliant works of many researchers, see \cite{Const04,Fatkullin2005A,Liu05,Zhou05,Liu07,Zhou07} for the model in $\mathbb{R}^3$; also see
\cite{Const07,Fatkullin2005A,Luo05} for the model in
$\mathbb{R}^2$, and \cite{Wang2008} for the general dimensional case
$\mathbb{R}^D$. Inspired by these works, equations (\ref{U2}) and (\ref{f2})
with other kernels enjoying similar dimension reduction property has also be
analyzed, see e.g. \cite{Chen2010}.

Fatkullin and Slastikov \cite{Fatkullin2005A,Fatkullin2005}
completely classified the solution of the Onsager equation with the
Maier-Saupe kernel and for the anti-symmetric kernel \[ K(r, r') = - r.r'\] on
$S^1$ and $S^2$. Instead of $\lambda$, they presented their results in terms
of the temperature $\tau$, however since $\lambda$ and $\tau$ are inversely
proportional their results hold for the original case. In particular they
obtained the exact nematic solutions in $S^2$ for the problem with the
Maier-Saupe kernel as:
\begin{equation}
  f (\varphi, \theta) = \beta^{- 1} e^{- r_{1, 2} (\tau)  (3 \cos^2 \theta -
  1)},
\end{equation}
and for the anti-symmetric kernel they obtained
\begin{equation}
  f (\varphi, \theta) = \beta^{- 1} e^{- r (\tau) \cos \nocomma \theta} .
\end{equation}
In addition they presented some results of the stability of the above
solutions. Luo et al \cite{Luo2005} considered the Maier-Saupe
interaction kernel on $S^1$ and proved that for the potential strength
$\lambda \leq 4$, the isotropic solution $\bar{f} = \frac{1}{2 \pi}$ is the
unique solution of the equation. The nematic solution will bifurcates when the
the liquid crystal cool down or equivalently the potential strength increase
to $\lambda > 4$. They also proved that all nematic solutions are obtained by
an arbitrary rotation from a $\pi$ periodic nematic solution. At the same
time, Liu et al \cite{Liu2005} obtained an explicit solution to
(\ref{U2}) and (\ref{f2}) on $S^2$ with the Maier-Saupe kernel and determined the
bifurcation regime of the solution. The solution is of the following form for
a director $y$ and constant $k$ and
\begin{equation}
  f (x) = ke^{- \eta (x.y)^2} .
\end{equation}
With the Maier-Saupe model understood, interest in the original Onsager model
was resurrected. Much progress has been made in the past few years in the case
$D = 2$. In \cite{Chen2010}, the axisymmetry of all possible solutions is
proved in the sense that for any solution $f(\theta)$, there is $\theta_0$ such that
$f(\theta_0) = f (\theta_0 + \pi)$. It is also proved in \cite{Chen2010}
that for appropriate $\lambda$, there are solutions of arbitrary periodicity.
In \cite{Wang2008} the authors rewrite (\ref{U2}) and (\ref{f2}) into an infinite system of nonlinear equations for the
Fourier coefficients of $f (\theta)$ and calculated numerically the first few
bifurcations. Chen et al \cite{Chen2010} observed that for even integers
$l = 2 n$, the the interaction potential
\begin{equation}
  U (f) (\theta) = \int_{S^1} \sin^l (\theta - \theta') f (\theta') d \theta',
\end{equation}
behave completely similar to the Maier-Saupe original potential and can be
reduced to a model in finite dimensional space, while for odd $l = 2 n - 1$
the obtained equation will be a nonlinear partial differential equation. By
reducing the Onsager equation to a system of ordinary differential equations,
they could prove the existence of auxiliary symmetric nematic solution for the
Onsager equation on $S^1$ and for all odd power potential kernel. More
recently, in \cite{Lucia2010} the authors studied the case $D = 2$
through cutting-off the Onsager kernel and reducing (\ref{U2}) and (\ref{f2}) to a finite dimensional system of nonlinear
equations, and obtain local bifurcation structure for this finite dimensional
approximation. In particular, they used a result of bifurcation by Crandall
and Rabinowitz \cite{Crandall71} for the general truncated trigonometric
kernel
\begin{equation}
  K (\theta, \theta') = - \sum_{n = 0}^N k_n \cos \nocomma 2 n (\theta -
  \theta') .
\end{equation}
The original Onsager kernel $| \sin (\theta - \theta') |$ on $S^1$ is
approximated by the above kernel for special
\begin{equation}
  k_n = \frac{1}{\pi}  \left( n^2 - \frac{1}{4} \right)^{- 1} .
\end{equation}
In this case the problem is reduced to finding the zeros of a finite
dimensional nonlinear problem.

In a new study of this problem in $D = 2$, X. Yu and the author {\cite{Nik15}}
obtained the following results for a general potential kernel (still
covering the original Onsager kernel):
\begin{itemizedot}
  \item The problem has a unique solution, which must be the constant
  solution, when $0 < \lambda < \lambda_0 \assign \frac{1}{|K|_{\infty} -
  k_0}$ where $k_0$ is the first mode of the Fourier expansion
  \begin{equation}
    K (\theta) = \sum_{k = 0}^{\infty} k_m \cos (2 \nocomma m \nocomma \theta).
  \end{equation}
  \item Two solutions bifurcate from the trivial solution at every $\lambda_m
  = - \frac{2}{k_m}$. The bifurcation is supercritical if $\frac{2 k_{2
  m}}{k_m} < 1$ and subcritical if $\frac{2 k_{2 m}}{k_m} > 1$. Furthermore,
  in the former case, the first pair of bifurcated solutions are stable and
  other bifurcated solutions are unstable, while in the latter case all
  bifurcated solutions are unstable.
  
  \item In particular, for the Onsager's model all bifurcations are
  supercritical. The first pair of bifurcated solutions are stable and other
  bifurcated solutions are unstable.
\end{itemizedot}

\section{Reformulation of the problem}

Let us formulate the problem in general dimension $D$ in terms of a nonlinear
map in a suitable space. The general Onsager model in $\mathbb{R}^D$ reads
\begin{equation}
  \label{U} U (f) (r) = \lambda \int_{S^{D - 1}} K (r, r') f (r') d \sigma
  (r'),
\end{equation}
\begin{equation}
  \label{f} f (r) = \left( \int_{S^{D - 1}} e^{- U (f)} d \sigma \right)^{- 1}
  e^{- U (f) (r)},
\end{equation}
where the potential kernel $K$ in (\ref{U}) is assumed to satisfy the
following properties:
\begin{equation}
  \label{K-prop} K (r, r') = K (- r, r') = K (r', r) = K (O (r), O (r')),
\end{equation}
for any rotation matrix $O ( \mathbb{R}^D)$. It is simply verified that this
conditions are satisfied by the original Onsager kernel (\ref{Onsager-K}).

We reformulate the system (\ref{U}) and (\ref{f}) into an abstract
equation involving a bounded, $(S)_+$ mapping, and then we generalize the classical Leray-Schauder degree to prove the existence and multiplicity of the
solution. Substitution (\ref{f}) in (\ref{U}) and canceling $f$ gives an equation for the potential $U(r)$:
\begin{equation}
  \label{Ur} U (r) = \lambda \left( \int_{S^{D - 1}} e^{- U} d \sigma
  \right)^{- 1}  \int_{S^{D - 1}} K (r, r') e^{- U (r')} d \sigma (r'),
\end{equation}
where $U$ enjoys the symmetric property $U (r) = U (- r)$. Note that once
(\ref{Ur}) is solved, $f (r)$ can be recovered from
\begin{equation}
  f (r) = \frac{e^{- U (r)}}{\int_{S^{D - 1}} e^{- U (r)} d \sigma} .
\end{equation}
Thus (\ref{Ur}) is equivalent to the original problem (\ref{U})-(\ref{f}).
Further reduction of the problem needs the following lemma.

\begin{lem}
  Under the symmetry assumptions $\left( \ref{K-prop} \right)$ on $K$, we have
  \[K (r, r') = F (|r - r' |),\] for some function $F$. In particular, this gives
  \begin{equation}
    \label{Kbar} \bar{K} = \frac{1}{|S^{D - 1} |}  \int_{S^{D - 1}} K (r, r')
    d \sigma (r') = \frac{1}{|S^{D - 1} |}  \int_{S^{D - 1}} K (r, r') d
    \sigma (r)
  \end{equation}
  is a constant.
\end{lem}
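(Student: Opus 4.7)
The plan is to show that the isotropy requirements \eqref{K-prop} force $K$ to be a function of the single scalar invariant $r \cdot r'$, which is equivalent to dependence on $|r - r'|$ via $|r - r'|^2 = 2 - 2 r \cdot r'$. The claim on $\bar K$ will then follow from a rotation change of variables.

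The key step is to prove that if $(r_1, r_1')$ and $(r_2, r_2')$ are two pairs of points in $S^{D-1}$ with $r_1 \cdot r_1' = r_2 \cdot r_2'$, then there exists $O \in SO(D)$ with $O(r_1) = r_2$ and $O(r_1') = r_2'$. In the generic case $r_i' \notin \{\pm r_i\}$, I would apply Gram--Schmidt inside each two-plane $\mathrm{span}(r_i, r_i')$ to produce orthonormal frames $(e_1, e_2)$ and $(f_1, f_2)$ with $e_1 = r_1$, $f_1 = r_2$; the assumption $r_1 \cdot r_1' = r_2 \cdot r_2'$ forces the coordinates of $r_1'$ in $(e_1, e_2)$ to coincide with those of $r_2'$ in $(f_1, f_2)$. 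Extending both frames to orthonormal bases of $\mathbb{R}^D$ and letting $O$ be the linear map $e_i \mapsto f_i$ gives an orthogonal map with $O(r_1) = r_2$ and $O(r_1') = r_2'$; since $D \geq 3$, there is at least one free basis vector whose sign can be flipped to enforce $\det O = +1$, placing $O$ in $SO(D)$. The degenerate cases $r_1' = \pm r_1$ (which force $r_2' = \pm r_2$) reduce to the transitivity of $SO(D)$ on $S^{D-1}$. Invoking $K(O(r_1), O(r_1')) = K(r_1, r_1')$ then yields $K(r, r') = F(r \cdot r')$ for some function $F$, and relabeling gives $K(r, r') = F(|r - r'|)$.

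Once $K$ is known to be rotation-invariant, the constancy of $\bar K$ is immediate: for any $r_a, r_b \in S^{D-1}$ choose $O \in SO(D)$ with $O(r_a) = r_b$ and use the rotation-invariance of $d\sigma$ to write
\begin{equation}
\int_{S^{D-1}} K(r_b, r')\, d\sigma(r') = \int_{S^{D-1}} K(O(r_a), O(r''))\, d\sigma(r'') = \int_{S^{D-1}} K(r_a, r'')\, d\sigma(r''),
\end{equation}
so the integral is independent of the fixed point; the equality of the two expressions in \eqref{Kbar} then follows from $K(r, r') = K(r', r)$ (or from Fubini after repeating the argument with the two arguments swapped).

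The main technical step is the construction of the rotation $O$, and in particular the orientation check ensuring $\det O = +1$ rather than $-1$; this is precisely where the hypothesis $D \geq 3$ is used, since in $D = 2$ one must additionally invoke the symmetry $K(r, r') = K(r', r)$ to absorb a sign ambiguity in the angle. Everything after the construction of $O$ is a routine change of variables.
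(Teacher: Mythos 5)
The paper does not actually prove this lemma---it defers to \cite{Nik15}---so there is nothing in the text to compare against line by line. Judged on its own, your argument is correct and complete: the heart of it is the two-point transitivity statement (pairs on $S^{D-1}$ with equal inner products are conjugate under $SO(D)$), and you handle the two points that are easy to fumble, namely the degenerate case $r' = \pm r$ and the determinant correction using a spare basis vector, which is exactly where $D \geq 3$ enters given that the hypothesis \eqref{K-prop} only grants invariance under rotations rather than all of $O(D)$. Two minor remarks: your proof shows that rotation invariance alone already yields $K = F(r\cdot r')$ in $D\geq 3$, so the evenness $K(-r,r')=K(r,r')$ and the symmetry $K(r,r')=K(r',r)$ are not needed for the stated conclusion (they only add that $F$, viewed as a function of $r\cdot r'$, is even); and the passage from $F(r\cdot r')$ to $F(|r-r'|)$ deserves the one-line justification that $t\mapsto\sqrt{2-2t}$ is a bijection on $[-1,1]$, which you essentially give. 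The constancy of $\bar K$ via the rotation-invariance of $d\sigma$ and the equality of the two integrals via $K(r,r')=K(r',r)$ are routine, as you say.
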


For a proof, we refer to \cite{Nik15}. Now we define
\begin{equation}
  \label{Khat} \hat{K} (r, r') = K (r, r') - \bar{K}
\end{equation}
with $\bar{K}$ is defined in (\ref{Kbar}). Now, for
\begin{equation}
  \label{Uhat} \hat{U} (r) = U (r) - \lambda \bar{K} .
\end{equation}
it is simply verified that (\ref{Ur}) is equivalent to the following system:
\begin{equation}
  V (r) = \frac{\lambda \int_{S^{D - 1}} \hat{K} (r, r') e^{- V (r')} d \sigma
  (r')}{\int_{S^{D - 1}} e^{- V} d \sigma}, \hspace{0.3cm} V (- r) = V (r),
  \hspace{0.2cm} \int_{S^{D - 1}} V (r) d \sigma = 0.
\end{equation}
Summarizing the above we reach:

\begin{lem}
  The original problem $\left( \ref{U} \right), \left( \ref{f} \right)$ is
  equivalent to the following problem.
  \begin{equation}
    \label{Vr} V (r) = \frac{\lambda \int_{S^{D - 1}} \hat{K} (r, r') e^{- V
    (r')} d \sigma (r')}{\int_{S^{D - 1}} e^{- V} d \sigma}, \hspace{1em} V (-
    r) = V (r), \hspace{1em} \int_{S^{D - 1}} V (r) d \sigma = 0.
  \end{equation}
\end{lem}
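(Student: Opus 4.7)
The plan is to establish the equivalence in two directions, using the preceding lemma (which guarantees $\bar{K}$ is a constant) as the single nontrivial input. The chain is $(\ref{U}),(\ref{f}) \Longleftrightarrow (\ref{Ur}) \Longleftrightarrow (\ref{Vr})$; the first equivalence is already observed in the text (by substituting \eqref{f} into \eqref{U} and normalizing), so the work is entirely in the second one.

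For the forward direction, I would start from \eqref{Ur} and decompose the kernel using \eqref{Khat} as $K(r,r') = \hat{K}(r,r') + \bar{K}$. Splitting the integral on the right-hand side of \eqref{Ur}, the contribution of the constant piece $\bar{K}$ is
\[
  \lambda \bar{K} \cdot \frac{\int_{S^{D-1}} e^{-U(r')}\, d\sigma(r')}{\int_{S^{D-1}} e^{-U}\, d\sigma} \;=\; \lambda \bar{K},
\]
so \eqref{Ur} becomes $U(r) - \lambda \bar{K} = \lambda (\int e^{-U})^{-1} \int \hat{K}(r,r') e^{-U(r')}\, d\sigma(r')$. Since the right-hand side is invariant under any constant shift $U \mapsto U + c$ (numerator and denominator are both multiplied by $e^{-c}$), substituting $\hat{U} = U - \lambda \bar{K}$ from \eqref{Uhat} on the right does not change anything, and I obtain the integral equation in \eqref{Vr} with $V = \hat{U}$.

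Next I would verify the two side-conditions in \eqref{Vr}. For the symmetry, note that \eqref{K-prop} gives $K(-r,r') = K(r,r')$, hence $\hat{K}(-r,r') = \hat{K}(r,r')$, and therefore the right-hand side of \eqref{Vr} is even in $r$, forcing $V(-r)=V(r)$. For the zero-mean constraint, integrate \eqref{Vr} over $r$ and interchange integrals; by the defining identity \eqref{Kbar} of the previous lemma, $\int_{S^{D-1}} \hat{K}(r,r')\, d\sigma(r) = 0$ for every $r'$, so $\int_{S^{D-1}} V(r)\, d\sigma(r) = 0$ automatically. Thus every solution of \eqref{Ur} produces a solution of \eqref{Vr}.

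For the converse, given $V$ satisfying \eqref{Vr}, I would set $U \assign V + \lambda \bar{K}$ and reverse the algebra: writing $\hat{K} = K - \bar{K}$ on the right of \eqref{Vr} adds exactly $\lambda \bar{K}$ to the right-hand side and shifts the left-hand side into $U$, recovering \eqref{Ur}; then $f = e^{-U}/\int e^{-U}$ solves $(\ref{U}),(\ref{f})$. The only conceptual point worth flagging is that \eqref{Ur} has a constant gauge ambiguity $U \mapsto U + c$ that leaves $f$ unchanged, and the role of the zero-mean condition is precisely to pick the unique representative $V$ in each gauge orbit; this makes the correspondence between physical solutions $f$ and solutions $V$ of \eqref{Vr} a genuine bijection. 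There is no real obstacle beyond bookkeeping — all the analytic content was absorbed into the previous lemma that $\bar K$ is a constant.
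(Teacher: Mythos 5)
Your proposal is correct and follows exactly the paper's route: the paper also passes through the intermediate equation for $U$, subtracts the constant $\bar K$ from the kernel and shifts $U$ by $\lambda\bar K$, leaving the details as "simply verified." You have merely filled in those details (including the useful observations that $\int_{S^{D-1}}\hat K(r,r')\,d\sigma(r)=0$ forces the zero-mean condition and that the constant shift is invisible to the quotient), which is entirely consistent with the paper's argument.
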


From now on, we will work with (\ref{Vr}) which is naturally a fixed point
problem. Let $G$ be the operator
\begin{equation}
  \label{G} G \nocomma (V) (r) = \beta (V)^{- 1}  \int_{S^{D - 1}} \hat{K} (r,
  r') e^{- V (r')} d \sigma (r'),
\end{equation}
where $\beta$ is
\begin{equation}
  \beta (V) = \int_{S^{D - 1}} e^{- V (r)} d \sigma (r),
\end{equation}
and $V$ belongs to $H_0 (S^{D - 1})$ where
\begin{equation}
  \left. \left. \label{H0} H_0 (S^{D - 1}) =\{u \in L^2 (S^{D - 1}), u (- r
  \right) = u (r), \int_{S^{D - 1}} u (r) \nocomma d \sigma = 0 \right\} .
\end{equation}
We employ the topological degree argument to study the structure of the
solutions of the equation
\begin{equation}
  \label{VlG} A (V) (r) \assign V (r) - \lambda G (V) (r),
\end{equation}
in terms of the parameter $\lambda$. We carry out the calculations in terms of
spherical harmonics on $S^{D - 1}$, which are the eigenfunctions of the
Laplace-Beltrami operator $- \Delta$ on $S^{D - 1}$. For $D = 2$, these
functions are just the usual trigonometric functions. Alternatively, for $r
\in S^{D - 1}$, the spherical harmonics $S_{nj} (D, r)$ can be defined by the
restriction of harmonic polynomials to the unit sphere $S^{D - 1}$. As it is
simply verified, see e.g. \cite{Calf95}, for $D \geq 3$ and given $n$, there
are exactly
\begin{equation}
  \label{N-Dn} N (D, n) = \frac{(2 n + D - 2)  (n + D - 3) !}{(D - 2) !n!}
\end{equation}
spherical harmonics $\{S_{nj} (D, r)\}_{j = 1}^{N (D, n)}$. An important class
of spherical harmonics on $S^{D - 1}$ consists of ones that are invariant
under the rotation of $S^{D - 2}$. These are usual Legendre polynomial $P_n
(D, t)$ for $t \in [- 1, 1]$. Alternatively, the Legendre polynomial $P_n$ can
be defined through the expansion of the potential $V (\rho) = (1 + \rho^2 - 2
\rho \cos \gamma)^{- \frac{D - 2}{2}}$ that is generated by a unit mass
located at distance $r = \frac{1}{\rho} > 1$ in terms of $\rho$ as
\begin{equation}
  V = \sum_{n = 0}^{\infty} P_n (D, \cos \gamma) \rho^n .
\end{equation}
\begin{prop}
  \label{Prop-K}The Onsager kernel in $\mathbb{R}^D$ defined by
  \begin{equation}
    \label{Ons-K} K (r, r') = | \sin \gamma |,
  \end{equation}
  where $\gamma$ is the angle between $r, r'$ has the expansion
  \begin{equation}
    \label{K-expan} K (\gamma) = - \sum_{n = 1}^{\infty} k_n P_{2 n} (D, \cos
    \gamma) + k_0,
  \end{equation}
  where $k_0 > 0$ and for $n \geq 1$, $k_n$ are positive and make a decreasing
  sequence, i.e.
  \begin{equation}
    \label{kn} k_n > 0, \hspace{0.5cm} k_n > k_{n + 1} .
  \end{equation}
\end{prop}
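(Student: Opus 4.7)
The plan is to expand the kernel $K(\gamma)=|\sin\gamma|$ in the basis of Legendre polynomials $P_n(D,\cos\gamma)$ and read off the required properties of the coefficients from explicit integrals. Using the orthogonality of $\{P_n(D,\cdot)\}$ on $[-1,1]$ with weight $(1-t^2)^{(D-3)/2}$, the substitution $t=\cos\gamma$ (so that $|\sin\gamma|=\sqrt{1-t^2}$) yields
\[
  a_n \;=\; h_n^{-1}\int_{-1}^{1} P_n(D,t)\,(1-t^2)^{(D-2)/2}\,dt,\qquad h_n>0.
\]
Since $P_n(D,-t)=(-1)^n P_n(D,t)$ (from Rodrigues), the change of variables $t\mapsto -t$ forces $a_n=0$ for odd $n$. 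Hence the expansion has the form~(\ref{K-expan}) with $k_0\assign a_0$ and $k_n\assign -a_{2n}$ for $n\ge1$, and it remains to verify $k_0>0$, $k_n>0$, and $k_n>k_{n+1}$.

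Positivity of $k_0$ is immediate since the integrand defining $a_0$ is strictly positive on $(-1,1)$. For the sign of $a_{2n}$ when $n\ge1$, I would apply the Rodrigues formula
\[
  P_{2n}(D,t) \;=\; C_{n,D}\,(1-t^2)^{-(D-3)/2}\,\frac{d^{2n}}{dt^{2n}}\bigl[(1-t^2)^{2n+(D-3)/2}\bigr],\qquad C_{n,D}>0,
\]
and integrate by parts $2n$ times in the integral for $a_{2n}$. All boundary terms vanish, because for $D\ge3$ the factor $(1-t^2)^{2n+(D-3)/2}$ has a zero of order at least $2n$ at $t=\pm1$. This converts the integral to
\[
  a_{2n} \;=\; \widetilde C_{n,D}\int_{-1}^{1}(1-t^2)^{2n+(D-3)/2}\,g^{(2n)}(t)\,dt,\qquad g(t)\assign(1-t^2)^{1/2},\ \widetilde C_{n,D}>0.
\]
The binomial series gives $g(t)=1-\sum_{k\ge1}\tfrac{1}{4^k(2k-1)}\binom{2k}{k}t^{2k}$, in which every non-constant coefficient is strictly negative. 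Differentiating $2n$ times (for $n\ge1$) preserves this sign, so $g^{(2n)}(t)$ is a series in even non-negative powers of $t$ with strictly negative coefficients, hence $g^{(2n)}<0$ on $(-1,1)$. Therefore $a_{2n}<0$ and $k_n>0$.

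For the monotonicity $k_{n+1}<k_n$, I would evaluate the last integral term-by-term against the series for $g^{(2n)}$ by means of the beta identity $\int_{-1}^{1}t^{2m}(1-t^2)^{s}\,dt=B(m+\tfrac12,\,s+1)$, obtaining a closed form for $k_n$ as a positive series in ratios of Gamma functions. As a sanity check, for $D=2$ this reduces to the classical Fourier expansion $|\sin\gamma|=\tfrac{2}{\pi}-\tfrac{4}{\pi}\sum_{n\ge1}\frac{\cos(2n\gamma)}{4n^2-1}$, which gives $k_n=4/[\pi(4n^2-1)]$, manifestly positive and decreasing. The main obstacle is precisely this last step: doing the Gamma-function bookkeeping so as to pin down $k_{n+1}/k_n$ in a form visibly inside $(0,1)$ uniformly in $n\ge1$ and $D\ge3$. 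The other ingredients (orthogonality, parity, Rodrigues and the beta identity) are entirely standard.
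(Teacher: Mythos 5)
Your reduction to the coefficient integrals, the parity argument killing the odd modes, and the positivity of $k_0$ all match the paper. Your argument for $k_n>0$, $n\ge1$, is a genuinely different route and looks sound: after $2n$-fold integration by parts (with boundary terms vanishing since $D\ge3$), the sign of $a_{2n}$ is read off from the fact that $\frac{d^{2n}}{dt^{2n}}\sqrt{1-t^2}<0$ on $(-1,1)$, which follows from the negativity of the non-constant binomial coefficients. The paper gets positivity differently, as a by-product of a recursion (see below) together with the single check $k_1>0$.

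The genuine gap is the monotonicity $k_{n+1}<k_n$, which is half of the statement and the part that carries the real content; you explicitly leave it unresolved, flagging the Gamma-function bookkeeping as ``the main obstacle'' and offering only the $D=2$ Fourier series as a sanity check, which lies outside the $D\ge3$ regime where the proposition is used. The difficulty is real: expanding $g^{(2n)}$ against the beta integrals produces $k_n$ as a double-indexed positive series, and the normalization constants $h_{2n}$ themselves vary with $n$, so the ratio $k_{n+1}/k_n$ does not come out term by term. The paper sidesteps all of this by multiplying the Gegenbauer differential equation for $C^{(\alpha)}_n$, $\alpha=\frac{D-2}{2}$, by $(1-t^2)^{1/2}$ and integrating over $(-1,1)$, which (combined with two standard Gegenbauer identities) yields
\[
  \int_{-1}^{1}(1-t^2)^{\alpha}C^{(\alpha)}_{n+2}(t)\,dt
  =\frac{(n-1)(n+2\alpha)}{(n+2)(n+2\alpha+3)}\int_{-1}^{1}(1-t^2)^{\alpha}C^{(\alpha)}_{n}(t)\,dt,
\]
and hence, after inserting the normalizations $N(D,2n)$ and $C^{(\alpha)}_{2n}(1)$, the closed-form recursion
\[
  k_{n+1}=\frac{(2n-1)(4n+D+2)(2n+D-2)}{2(n+1)(4n+D-1)(2n+D+1)}\,k_n,
\]
whose coefficient is directly checked to lie in $(0,1)$ for $n\ge1$ and $D\ge3$. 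This one identity delivers positivity and strict decrease simultaneously. To complete your proof you need either to derive such a recursion or to produce some other uniform bound showing $k_{n+1}/k_n<1$; as written, the decreasing property is asserted but not proved.
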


For the proof we need the following lemma.

\begin{lem}
  Let $C^{(\alpha)}_n (t)$ denote the Gegenbauer polynomials of order $n$ and
  $\alpha = \frac{D - 2}{2}$. We have
  \begin{equation}
    \label{C-n+2-n} \int_{- 1}^1 (1 - t^2)^{\alpha} C^{(\alpha)}_{n + 2} (t) =
    \frac{(n - 1)  (n + 2 \alpha)}{(n + 2)  (n + 2 \alpha + 3)}  \int_{- 1}^1
    (1 - t^2)^{\alpha} C^{(\alpha)}_n (t) .
  \end{equation}
\end{lem}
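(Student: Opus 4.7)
The plan is to set $J_{n,\alpha} := \int_{-1}^{1}(1-t^2)^{\alpha}C^{(\alpha)}_{n}(t)\,dt$ and to derive the target recursion by combining the standard three-term recurrence and derivative relation for Gegenbauer polynomials with a single integration by parts. The setup is delicate because the weight $(1-t^2)^{\alpha}$ is one half-power off from the natural Gegenbauer weight $(1-t^2)^{\alpha-1/2}$, so orthogonality is unavailable; however, the pair of identities $(C^{(\alpha)}_{n+1})'(t)=2\alpha C^{(\alpha+1)}_n(t)$ and $\tfrac{d}{dt}(1-t^2)^{\alpha+1}=-2(\alpha+1)t(1-t^2)^{\alpha}$ fit perfectly with the power $\alpha$ appearing in the integrand.

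First I would eliminate $tC^{(\alpha)}_n(t)$ between the derivative formula $(1-t^2)(C^{(\alpha)}_n)'(t)=(n+2\alpha-1)C^{(\alpha)}_{n-1}(t)-ntC^{(\alpha)}_n(t)$ and the three-term recurrence $2(n+\alpha)tC^{(\alpha)}_n(t)=(n+1)C^{(\alpha)}_{n+1}(t)+(n+2\alpha-1)C^{(\alpha)}_{n-1}(t)$. After a shift $n\mapsto n+1$, the resulting identity takes the form
\begin{equation}
(n+1)(n+2)C^{(\alpha)}_{n+2}(t)=(n+2\alpha)(n+2\alpha+1)C^{(\alpha)}_{n}(t)-2(n+\alpha+1)(1-t^2)(C^{(\alpha)}_{n+1})'(t).
\end{equation}
Integrating both sides against $(1-t^2)^{\alpha}\,dt$ reduces everything to the single integral $\int_{-1}^{1}(1-t^2)^{\alpha+1}(C^{(\alpha)}_{n+1})'(t)\,dt$, which by integration by parts (the boundary term vanishes since $\alpha+1>0$) equals $2(\alpha+1)\int_{-1}^{1}t(1-t^2)^{\alpha}C^{(\alpha)}_{n+1}(t)\,dt$.

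Next I would reapply the three-term recurrence at index $n+1$ to write $tC^{(\alpha)}_{n+1}(t)=\bigl[(n+2)C^{(\alpha)}_{n+2}(t)+(n+2\alpha)C^{(\alpha)}_{n}(t)\bigr]/[2(n+\alpha+1)]$. This closes the system: the intermediate integral becomes a linear combination of $J_{n+2,\alpha}$ and $J_{n,\alpha}$, and substituting into the identity of the previous step yields, after collecting coefficients,
\[
(n+2)(n+2\alpha+3)\,J_{n+2,\alpha}=(n-1)(n+2\alpha)\,J_{n,\alpha},
\]
which is exactly \eqref{C-n+2-n} upon division.

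The main obstacle is algebraic bookkeeping: one must choose the combination of the three-term and derivative identities so that, after the single integration by parts, the resulting integral can be re-expressed back in the same family $\{J_{\cdot,\alpha}\}$, rather than leaking into the shifted family $\{J_{\cdot,\alpha+1}\}$ (for instance by naively invoking $(C^{(\alpha)}_{n+1})'=2\alpha C^{(\alpha+1)}_n$), which would fail to close the recursion. A direct attack through Rodrigues' formula is tempting but fights the mismatched half-power in the weight and seems considerably messier, so I expect the route above based on two applications of the recurrence plus one integration by parts to be the cleanest.
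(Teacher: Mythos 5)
Your proof is correct. Writing $J_{n,\alpha}=\int_{-1}^{1}(1-t^2)^{\alpha}C^{(\alpha)}_{n}(t)\,dt$ as you do, I checked the pointwise identity $(n+1)(n+2)C^{(\alpha)}_{n+2}=(n+2\alpha)(n+2\alpha+1)C^{(\alpha)}_{n}-2(n+\alpha+1)(1-t^2)(C^{(\alpha)}_{n+1})'$ obtained by your elimination and shift, the integration by parts (the boundary term does vanish since $\alpha+1>0$), and the final collection of coefficients, which indeed yields $(n+2)(n+2\alpha+3)J_{n+2,\alpha}=(n-1)(n+2\alpha)J_{n,\alpha}$. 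The route differs from the paper's mainly in which derivative identity does the work: the paper starts from the Gegenbauer differential equation in Sturm--Liouville form, multiplies by $(1-t^2)^{1/2}$ and integrates by parts to get $\int_{-1}^{1}(1-t^2)^{\alpha}\,t\,(C^{(\alpha)}_{n})'\,dt=-n(n+2\alpha)J_{n,\alpha}$, applies this at $n$ and at $n+2$, subtracts, and converts the difference of derivatives using $\frac{d}{dt}\bigl(C^{(\alpha)}_{n+2}-C^{(\alpha)}_{n}\bigr)=2(n+\alpha+1)C^{(\alpha)}_{n+1}$, whereas you package the first-order contiguous and derivative relations into a single pointwise identity and integrate by parts only once. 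The endgame is identical in both arguments: the three-term recurrence $2(n+\alpha+1)tC^{(\alpha)}_{n+1}=(n+2)C^{(\alpha)}_{n+2}+(n+2\alpha)C^{(\alpha)}_{n}$ is what re-expresses $\int_{-1}^{1}t(1-t^2)^{\alpha}C^{(\alpha)}_{n+1}(t)\,dt$ inside the family $\{J_{\cdot,\alpha}\}$ rather than leaking into $\{J_{\cdot,\alpha+1}\}$, so the obstacle you flag is the right one and your resolution coincides with the paper's. Your version is slightly more economical (one integration by parts instead of two); the paper's avoids having to derive the pointwise elimination identity separately, since it quotes the differential equation directly.
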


\begin{proof}
  Recall that $C^{(\alpha)}_n$ satisfies the Gegenbauer differential equation
  \begin{equation}
    \frac{d}{d \nocomma t}  \left[ (1 - t^2)^{\alpha + \frac{1}{2}} 
    \frac{d}{d \nocomma t} C^{(\alpha)}_n \right] + n (n + 2 \alpha)  (1 -
    x^2)^{\alpha - \frac{1}{2}} C^{(\alpha)}_n = 0.
  \end{equation}
  Multiply both sides of the Gegenbauer equation by $(1 - t^2)^{1 / 2}$ and
  integrate in $(- 1, 1)$ to obtain
  \begin{equation}
    \label{C-n} \int_{- 1}^1 (1 - t^2)^{\alpha} t \frac{d}{d \nocomma t}
    C^{(\alpha)}_n = - n (n + 2 \alpha)  \int_{- 1}^1 (1 - t^2)^{\alpha}
    C^{(\alpha)}_n .
  \end{equation}
  The same result holds for $n + 2$, that is,
  \begin{equation}
    \label{C-n+2} \int_{- 1}^1 (1 - t^2)^{\alpha} t \frac{d}{d \nocomma t}
    C^{(\alpha)}_{n + 2} = - (n + 2)  (n + 2 \alpha + 2)  \int_{- 1}^1 (1 -
    t^2)^{\alpha} C^{(\alpha)}_{n + 2} .
  \end{equation}
  Subtract formula (\ref{C-n}) from (\ref{C-n+2}) and use the identity
  \begin{equation}
    \frac{d}{d \nocomma t}  (C^{(\alpha)}_{n + 2} - C^{(\alpha)}_n) = 2 (n +
    \alpha + 1) C^{(\alpha)}_{n + 1},
  \end{equation}
  to reach
  \begin{eqnarray*}
    2 (n + \alpha + 1)  \int_{- 1}^1 (1 - t^2)^{\alpha} tC^{(\alpha)}_{n + 1}
    = n (n + 2 \alpha)  \int_{- 1}^1 (1 - t^2)^{\alpha} C^{(\alpha)}_n - &  &
    \\
    - (n + 2)  (n + 2 \alpha + 2)  \int_{- 1}^1 (1 - t^2)^{\alpha}
    C^{(\alpha)}_{n + 2} . &  & 
  \end{eqnarray*}
  Use the identity
  \begin{equation}
    2 (n + \alpha + 1) tC^{(\alpha)}_{n + 1} = (n + 2) C^{(\alpha)}_{n + 2} +
    (n + 2 \alpha) C^{\alpha}_n,
  \end{equation}
  to conclude
  \begin{eqnarray*}
    \int_{- 1}^1 (1 - t^2)^{\alpha}  [(n + 2) C^{(\alpha)}_{n + 2} + (n + 2
    \alpha) C^{\alpha}] = n (n + 2 \alpha)  \int_{- 1}^1 (1 - t^2)^{\alpha}
    C^{(\alpha)}_n - &  & \\
    - (n + 2)  (n + 2 \alpha + 2)  \int_{- 1}^1 (1 - t^2)^{\alpha}
    C^{(\alpha)}_{n + 2} . &  & 
  \end{eqnarray*}
  Now, a simple algebraic calculation gives (\ref{C-n+2-n}).
\end{proof}

Now let us return and prove the proposition (\ref{Prop-K}).

\begin{proof}
  {\dueto{of proposition (\ref{Prop-K})}}The set $P_n (D, \cos \gamma), n \geq
  0$ forms a complete system for the functions of $\gamma$ on $S^{D - 1}$ and
  thus the Onsager kernel has an expansion in terms of the functions in this
  set. On the other hand, since $( \ref{Ons-K})$ is even, the coefficients of
  the odd terms in the expansion are zero. Thus, the Onsager kernel has the
  expansion of the form (\ref{K-expan}). Since $K$ has a positive average,
  $k_0 > 0$. In order to show that $k_n$ enjoy (\ref{kn}), we use the explicit
  formula for $k_n$
  \begin{equation}
    \label{kn-integral} k_n = - \frac{\sigma_{D - 1} N (D, 2 n)}{\sigma_D} 
    \int_{- 1}^1 (1 - t^2)^{\frac{D - 2}{2}} P_{2 n} (D, t) d \nocomma t.
  \end{equation}
  We obtain
  \[ k_{n + 1} = - \frac{\sigma_{D - 1} N (D, 2 n + 2)}{\sigma_D}  \int_{-
     1}^1 (1 - t^2)^{\frac{D - 2}{2}} P_{2 n + 2} (D, t) d \nocomma t. \]
  Use the formula (\ref{C-n+2-n}) to obtain
  \begin{eqnarray*}
    k_{n + 1} = - \frac{\sigma_{D - 1} N (D, 2 n + 2)}{\sigma_D
    C^{(\alpha)}_{2 n + 2} (1)}  \int_{- 1}^1 (1 - t^2)^{\alpha}
    C^{(\alpha)}_{2 n + 2} (t) = &  & \\
    - \frac{\sigma_{D - 1} N (D, 2 n + 2)}{\sigma_D C^{(\alpha)}_{2 n + 2}
    (1)}  \frac{(2 n - 1)  (n + \alpha)}{(n + 1)  (2 n + 2 \alpha + 3)} 
    \int_{- 1}^1 (1 - t^2)^{\alpha} C^{(\alpha)}_{2 n} (t) = &  & \\
    - \frac{\sigma_{D - 1} N (D, 2 n + 2)}{\sigma_D C^{(\alpha)}_{2 n + 2}
    (1)}  \frac{(2 n - 1)  (n + \alpha) C^{(\alpha)}_{2 n} (1)}{(n + 1)  (2 n
    + 2 \alpha + 3)}  \int_{- 1}^1 (1 - t^2)^{\frac{D - 2}{2}} P_{2 n} (D, t)
    . &  & 
  \end{eqnarray*}
  Now use (\ref{kn-integral}) for the last integral above to write
  \begin{equation}
    \label{kn+1-C-kn} k_{n + 1} = \frac{C^{(\alpha)}_{2 n} (1) N (D, 2 n +
    2)}{C^{(\alpha)}_{2 n + 2} (1) N (D, 2 n)}  \frac{(2 n - 1)  (n +
    \alpha)}{(n + 1)  (2 n + 2 \alpha + 3)} k_n .
  \end{equation}
  Direct computation shows that $k_1 > 0$ and then all $k_n, n \geq 1$ are
  positive. Substitution $N (D, n)$ from (\ref{N-Dn}) and the following
  formula
  \begin{equation}
    \frac{C^{(\alpha)}_{2 n} (1)}{C^{(\alpha)}_{2 n + 2} (1)} = \frac{(2 n +
    1) \cdots (2 n + D - 3)}{(2 n + 3) \cdots (2 n + D - 1)},
  \end{equation}
  into (\ref{kn+1-C-kn}) we obtain
  \begin{equation}
    \label{kn+1-kn} k_{n + 1} = \frac{(2 n - 1)  (4 n + D + 2)  (2 n + D -
    2)}{2 (n + 1)  (4 n + D - 1)  (2 n + D + 1)} k_n .
  \end{equation}
  The coefficient of $k_n$ in (\ref{kn+1-kn}) is simply verified to be less
  than $1$, and therefore $(k_n), n \geq 1$ forms a positive decreasing
  sequence. Direct computation shows that $k_0 > 0$ and therefore $k_n$, $n
  \geq 1$ satisfy conditions (\ref{kn}).
\end{proof}

\section{Main Results}

In sequel, we assume $D \geq 3$, and that the potential kernel $K$ has the
expansion (\ref{K-expan}). As we observed above, this assumption covers the
original Onsager kernel. We systematically use the topological degree argument
and its bifurcation consequences for the equation
\begin{equation}
  \label{ulG} A (u) \assign u - g (u) = 0,
\end{equation}
where $g = \lambda G$ and $G$ is defined in (\ref{G}). Notice that the
classical Leray-Schauder degree fails to apply here because the map $A : H_0
(S^{D - 1}) \rightarrow H_0 (S^{D - 1})$ is not continuous in any neighborhood
of $0 \in H_0 (S^{D - 1})$. Let us show this by a simple example in $D = 3$.
For fixed $\bar{r} \in S^2$, let $u_n$ be the sequence
\begin{equation}
  u_n (r) = \left\{ \begin{array}{ll}
    \log (2 \pi (1 - \cos (1 / n))) & \cos^{- 1} (r. \bar{r})^{\nosymbol} \in
    \left( 0, \frac{1}{n} \right)\\
    0 & {\rm otherwise}
  \end{array} \right. .
\end{equation}
Obviously, $u_n \xrightarrow{H_0 (S^2)} 0$, while
\[ \lim_n G (u_n) = \lim_n  \frac{1}{2 \pi (1 - \cos (1 / n))}  \int_0^{1 / n}
   \int_0^{2 \pi} \hat{K} (\gamma) d \sigma \neq G (0) = 0. \]
\begin{thrm}
  \label{Hw}Assume that $K (\gamma)$ belongs to the class of Holder continuous
  maps, then the map $G : \Omega (\lambda) \subset H_0 \rightarrow H_0$ is
  continuous and compact where
  \begin{equation}
    \label{Omega} \Omega (\lambda) = \{u \in H_0 (S^{D - 1}), |u (r) | \leq
    \lambda || \hat{K} ||_{\infty} \} .
  \end{equation}
\end{thrm}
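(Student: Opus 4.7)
The plan is to exploit the $L^\infty$-bound built into $\Omega(\lambda)$ to convert the exponential nonlinearity $V \mapsto e^{-V}$ into a uniformly Lipschitz map, and then to use the H\"older regularity of $\hat{K}$ to obtain compactness via Arzel\`a-Ascoli. Throughout, set $M \assign \lambda \|\hat{K}\|_{\infty}$; then on $\Omega(\lambda)$ one has the two-sided pointwise bound $e^{-M} \leq e^{-V(r)} \leq e^{M}$, so $\beta(V)$ is trapped between $|S^{D-1}|e^{-M}$ and $|S^{D-1}|e^{M}$, and in particular is bounded away from zero.

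Before the main estimates I would verify that $G$ actually maps into $H_0$. The evenness $G(V)(-r) = G(V)(r)$ is inherited from $\hat{K}(-r,r') = \hat{K}(r,r')$, while the zero-mean condition $\int G(V)\, d\sigma = 0$ follows by Fubini from $\int \hat{K}(r,r')\, d\sigma(r) = 0$, which is precisely how $\bar{K}$ was chosen in (\ref{Kbar}).

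For continuity, the mean value theorem gives $|e^{-V(r)} - e^{-W(r)}| \leq e^{M} |V(r) - W(r)|$ for $V, W \in \Omega(\lambda)$, hence $|\beta(V) - \beta(W)| \leq e^{M} \|V - W\|_{L^{1}}$, and since $\beta$ is bounded below, $V \mapsto \beta(V)^{-1}$ is also Lipschitz from $L^{2}$ to $\mathbb{R}$. A Cauchy-Schwarz estimate applied to the integral in (\ref{G}) then yields, uniformly in $r$,
\[ |G(V)(r) - G(W)(r)| \leq C(\lambda, \|\hat{K}\|_{\infty}, D) \, \|V - W\|_{L^{2}}, \]
which is in fact stronger than continuity into $H_{0}$. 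For compactness, given any sequence $V_{n} \in \Omega(\lambda)$, I would invoke the H\"older continuity of $\hat{K}$ with some exponent $\alpha \in (0,1]$ to bound
\[ |G(V_{n})(r_{1}) - G(V_{n})(r_{2})| \leq \beta(V_{n})^{-1} [\hat{K}]_{\alpha} |r_{1} - r_{2}|^{\alpha} \int_{S^{D-1}} e^{-V_{n}} d\sigma = [\hat{K}]_{\alpha} |r_{1} - r_{2}|^{\alpha}, \]
so $\{G(V_{n})\}$ is uniformly bounded (by $\|\hat{K}\|_{\infty}$) and equi-H\"older on $S^{D-1}$. Arzel\`a-Ascoli then delivers a uniformly convergent subsequence, and uniform convergence implies $L^{2}$ convergence on the compact sphere.

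The genuine subtlety is precisely the one displayed by the counterexample $u_{n}$ stated just before the theorem: the map $V \mapsto e^{-V}$ is not continuous on arbitrary $L^{2}$-bounded subsets of $H_{0}$, and the restriction to $\Omega(\lambda)$ is indispensable at essentially every step of the continuity argument. Once that $L^{\infty}$-control is imposed, however, the proof is routine and requires no harmonic analysis on $S^{D-1}$; the smoothing supplied by the integral kernel $\hat{K}$ alone produces the equicontinuity needed for compactness.
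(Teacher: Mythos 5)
Your proof is correct, but it reaches compactness by a genuinely different mechanism than the paper. The paper truncates the expansion $\hat{K}(\gamma)=-\sum_{n\ge 1}k_nP_{2n}(D,\cos\gamma)$ at level $N$, notes that the resulting operators $g_N$ have finite rank, and deduces compactness of $g$ from the uniform convergence $g_N\to g$ on $\Omega(\lambda)$; the engine of that argument is the uniform convergence $\max_\gamma|\hat{K}-\hat{K}_N|\to 0$ of the Gegenbauer expansion of a H\"older continuous zonal function, and the finite-rank structure it exhibits is reused later in the Galerkin-type degree construction. You instead feed the H\"older seminorm of $\hat{K}$ directly into the integral to show that $G(\Omega(\lambda))$ is uniformly bounded by $\|\hat{K}\|_\infty$ and equi-H\"older, and conclude by Arzel\`a--Ascoli. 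Your route is more elementary and somewhat more general: it needs only a modulus of continuity for $\hat{K}$ (uniform continuity on the compact sphere would already suffice), it sidesteps the question of uniform convergence of spherical-harmonic expansions of H\"older functions in dimension $D$, and it delivers precompactness in $C(S^{D-1})$, which is stronger than precompactness in $L^2$. Your continuity argument is also sharper than the paper's one-line appeal to dominated convergence (which only gives sequential continuity after passing to pointwise a.e.\ convergent subsequences of $L^2$-convergent sequences): the mean-value-theorem estimate, combined with the lower bound $\beta(V)\ge |S^{D-1}|e^{-M}$, shows $G$ is globally Lipschitz from $(\Omega(\lambda),\|\cdot\|_{L^2})$ into $L^\infty(S^{D-1})$. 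The only thing the paper's approach buys that yours does not is the explicit finite-rank approximation, which is convenient for the degree theory developed afterwards; for the statement of the theorem itself your argument is complete.
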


\begin{proof}
  Notice that the fixed point set of $g$ has the a priori bound
  \begin{equation}
    \label{u-bnd} |u (r) | \leq \lambda \| \hat{K} \|_{\infty} \beta (u)^{- 1}
    \int_{S^{D - 1}} e^{- u (r)} d \sigma = \lambda \| \hat{K} \|_{\infty} .
  \end{equation}
  The continuity of $g : \Omega \rightarrow H_0 (S^{D - 1})$ simply follows
  from the dominant convergence theorem. To prove that $g : \Omega (\lambda)
  \rightarrow H_0 (S^{D - 1})$ is compact, we show that it is the limit of a
  sequence of compact maps in $\Omega$. Let $\hat{K}_N$ be the truncated
  kernel
  \begin{equation}
    \hat{K}_N (\gamma) = - \sum_{n = 1}^N k_n P_{2 n} (D, \cos \nocomma
    \gamma) .
  \end{equation}
  and also let $g_N : \Omega (\lambda) \rightarrow H_0 (S^{D - 1})$ be the
  finite range operators as
  \begin{equation}
    g_N (u) (r) = \lambda \beta (u)^{- 1}  \int_{S^{D - 1}} \hat{K}_N (\gamma)
    e^{- u (r')} d \sigma (r') .
  \end{equation}
  It is seen that for $u \in \Omega$ we have
  \begin{eqnarray*}
    \|g (u) - g_N (u)\|^2 = \lambda^2 \beta (u)^{- 2}  \int_{S^{D - 1}} \left(
    \int_{S^{D - 1}} ( \hat{K} (\gamma) - \hat{K}_N (\gamma)) e^{- u (r')} d
    \sigma (r') \right)^2 d \sigma (r) \leq &  & \\
    \leq \max_{\gamma} | \hat{K} (\gamma) - \hat{K}_N (\gamma) | 2 \pi
    \lambda^2 \xrightarrow{N \rightarrow \infty} 0. &  & 
  \end{eqnarray*}
  Thus, $g$ is the uniform limit of a sequence of finite range operators $g_N$
  on the bounded set $\Omega \subset H_0$ and therefore a compact map on
  $\Omega$.
\end{proof}

\subsection{Degree Argument}

We give a generalization of the classical Leray-Schauder degree for the map $A
: \Omega \rightarrow H_0 (S^{D - 1})$ where $A$ is the map (\ref{ulG}) and
$\Omega$ is given in (\ref{Omega}). A generalization of the Browder degree for
$(S)_+$ mappings will appear soon \cite{Nik15b}.

Let $H$ be a separable Hilbert space with a fixed orthonormal basis
$\mathcal{H} = \{u^1, u^2, \cdots\}$ and the inner product $(,)$. The finite
dimensional subspaces $H_n \subset H$ for $n \geq 1$ are naturally defined by
$H_n \assign span \{u^1, \ldots, u^n \}$. For a given map $f : H \rightarrow
H$, the finite rank approximation $f_n : H \rightarrow H_n$ is defined by the
projection of $f (u)$ into $H_n$, that is,
\begin{equation}
  \label{finit-rank} f_n (u) = \sum_{k = 1}^n (f (u), u^k) u^k .
\end{equation}
It is simply verified that $(f (u), v) = (f_n (u), v)$ for arbitrary $v \in
H_n$. For our case, $H$ is the space $H_0 (S^{D - 1})$ defined in (\ref{H0})
and we chose $\mathcal{H}$ the set of spherical harmonics $\{S_{2 n \nocomma
j} (D, r)\}$. Notice that the following conditions are satisfied
\begin{itemizedot}
  \item For any $n \geq 1$, the set $\Omega_n \assign \Omega \cap H_n$ has a
  non-empty interior in $H_n$,
  
  \item The map $g : \Omega \rightarrow H$ is continuous and relatively
  compact,
  
  \item The solution set of the equation $A (u) = 0$ lies in $\Omega$ and
  furthermore $0 \nin A (\partial \Omega)$ where
  \begin{equation}
    \partial \Omega \assign \{u \in \Omega ; \|u\|_{\infty} = \lambda \|
    \hat{K} \|_{\infty} \} .
  \end{equation}
\end{itemizedot}
\begin{thrm}
  \label{stbl-prop}There exists $N_0 > 0$ such that
  \begin{equation}
    \label{deg-n-n+1} \deg (A_n, \Omega_n, 0) = \deg (A_{n + 1}, \Omega_{n +
    1}, 0), \hspace{0.5cm} \forall n \geq N_0 .
  \end{equation}
\end{thrm}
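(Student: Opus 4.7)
My plan is a Leray-Schauder--type stability argument: connect $A_{n+1}$ on $\Omega_{n+1}$ to a natural extension of $A_n$ by an affine homotopy inside the finite-dimensional space $H_{n+1}$, apply Brouwer homotopy invariance, and then collapse back to $H_n$ via the reduction property of the Brouwer degree, exploiting the fact that the finite-rank approximation $g_n$ takes values in $H_n \subset H_{n+1}$.

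\textbf{Homotopy and the main obstacle.} For $u \in \overline{\Omega}_{n+1}$ and $s \in [0,1]$, I set
\begin{equation}
h_s(u) \assign u - s\, g_{n+1}(u) - (1-s)\, g_n(u).
\end{equation}
Then $h_1 = A_{n+1}$, while $h_0 = I - g_n$ restricts to $A_n$ on $H_n$ and acts as the identity on the orthogonal line $\mathrm{span}\{u^{n+1}\}$. The crux of the proof is to establish $N_0$ such that for every $n \geq N_0$, every $s \in [0,1]$, and every $u \in \partial \Omega_{n+1}$ we have $h_s(u) \neq 0$; this is the step I expect to carry the whole argument. I would proceed by contradiction: suppose $n_k \to \infty$, $s_k \in [0,1]$, and $u_k \in \partial \Omega_{n_k+1}$ satisfy $u_k = s_k\, g_{n_k+1}(u_k) + (1-s_k)\, g_{n_k}(u_k)$. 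The uniform estimate inside the proof of Theorem \ref{Hw} gives $\sup_{u \in \Omega}\|g_m(u) - g(u)\|_\infty \to 0$, hence $\|u_k - g(u_k)\|_\infty \to 0$. H\"older continuity of $\hat K$ together with the uniform lower bound on $\beta(u)$ for $u \in \Omega$ makes $\{g(u_k)\}$ equicontinuous and uniformly bounded on $S^{D-1}$; Arzel\`a-Ascoli then extracts a subsequence converging uniformly to some $u^*$. Consequently $u_k \to u^*$ uniformly and, by continuity of $G$ on $\Omega$ (Theorem \ref{Hw}), $A(u^*) = u^* - g(u^*) = 0$. Since $\|u_k\|_\infty = \lambda \|\hat K\|_\infty$ survives the uniform limit, $u^* \in \partial \Omega$, contradicting the standing hypothesis $0 \nin A(\partial \Omega)$.

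\textbf{Conclusion via homotopy invariance and reduction.} Once boundary-freeness of $h_s$ is secured for $n \geq N_0$, homotopy invariance of the Brouwer degree in $H_{n+1}$ yields
\begin{equation}
\deg(A_{n+1}, \Omega_{n+1}, 0) = \deg(I - g_n, \Omega_{n+1}, 0).
\end{equation}
Because $g_n$ takes values in $H_n \subset H_{n+1}$ and $0 \in H_n$, the reduction property of the Brouwer degree gives
\begin{equation}
\deg(I - g_n, \Omega_{n+1}, 0) = \deg\bigl((I - g_n)|_{H_n}, \Omega_n, 0\bigr) = \deg(A_n, \Omega_n, 0),
\end{equation}
which is precisely (\ref{deg-n-n+1}). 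The substantive difficulty is concentrated entirely in the boundary-separation step; the homotopy invariance and reduction formulae are then standard finite-dimensional tools.
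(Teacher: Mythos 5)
Your proof takes essentially the same route as the paper: the identical convex homotopy joining $A_{n+1}$ to the trivial extension of $A_n$ on $H_{n+1}$ (the paper's map $B_{n+1}$), Brouwer homotopy invariance followed by the reduction property, and a compactness-based contradiction to exclude zeros on $\partial\Omega_{n+1}$, with your single homotopy argument subsuming the paper's separate preliminary step that $A_n\neq 0$ on $\partial\Omega_n$ for large $n$. The one point to tighten is the boundary step: Theorem \ref{Hw} gives $\sup_{u\in\Omega}\|g_m(u)-g(u)\|\to 0$ only in $L^2$, not in $L^\infty$ as you assert, so you should either justify the sup-norm tail bound separately (e.g.\ via $\|(I-\Pr_{H_m})g(u)\|_{\infty}\leq\lambda\sum_{n>N}k_n$ when $H_m$ contains complete spherical-harmonic levels) or fall back on the paper's device of $L^2$ convergence combined with pointwise a.e.\ convergence of a subsequence to place the limit $u^{*}$ in $\partial\Omega$.
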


\begin{proof}
  Notice first that $\Omega_n$ has an open interior in $H_n$ for all $n$ and
  $A_n : \Omega_n \rightarrow H_n$ is continuous. Next we show that for
  sufficiently large $n$, there is no solution of the equation $A_n (u) = 0$
  for $u \in \partial \Omega_n$. Assuming contrary, there is a sequence
  $(z_n), z_n \in \partial \Omega$ such that $z_n = g_n (z_n)$. Since $g$ is
  completely continuous on $\Omega$, the sequence $g (z_n)$ converges (in a
  subsequence) to some $\zeta \in H$. Since $g_n (z_n) = \Pr_{H_n} g (z_n)$,
  it implies that $g_n (z_n) \xrightarrow{H} \zeta$ (in a subsequence) and
  then $z_n \xrightarrow{H} \zeta$. It is verified by the embedding of
  $L_{\infty}$ into $L^2 (S^{D - 1})$ that $\zeta \in \Omega$. Since $g$ is
  continuous in $\Omega$, we conclude $g (z_n) \xrightarrow{H} g (\zeta)$ and
  then $\zeta = g (\zeta)$. By a fact from the measure theory, we conclude $z_n
  \xrightarrow{{\rm pointwise}} \zeta$ almost everywhere. For arbitrary
  $\varepsilon > 0$, choose $n, r$ such that $|z_n (r) - \zeta (r) | <
  \frac{\varepsilon}{2}$ and $|z_n (r) | > \lambda \| \hat{K} \|_{\infty} -
  \frac{\varepsilon}{2}$. This implies that $| \zeta (r) | > \lambda \|
  \hat{K} \|_{\infty} - \varepsilon$ and then $\zeta \in \partial \Omega$, a
  contradiction. This establishes that for sufficiently large $n$, the
  solution of $A_n = 0$ does not occur on $\partial \Omega_n$. This allows us
  to define the classical Brouwer degree of the map $A_n$ restricted to
  $\Omega_n \cap H$ for sufficiently large $n$. Now define the map $B_{n + 1}$
  as follows:
  \begin{equation}
    \label{A-map} B_{n + 1} (u) = (A_n (u), (u, u^{n + 1}) u^{n + 1}) .
  \end{equation}
  Clearly by the classical properties of the Brouwer degree we have
  \begin{equation}
    \deg (B_{n + 1}, \Omega_{n + 1}, 0) = \deg (A_n, \Omega_{n + 1}, 0) .
  \end{equation}
  Now, consider the convex homotopy $h : [0, 1] \times \Omega_{n + 1}
  \rightarrow H_{n + 1}$
  \begin{equation}
    h (t) = (1 - t) A_{n + 1} + tB_{n + 1} .
  \end{equation}
  Note first that $0 \nin h (t) (z)$ for $z \in \partial \Omega_{n + 1}$ and
  $t = 0, 1$. If (\ref{deg-n-n+1}) does not hold then there exists a sequence
  $(t_n)$ for $t_n \in (0, 1)$ and $z_n \in \partial \Omega$ such that $h
  (t_n) (z_n) = 0$. This implies that
  \begin{equation}
    A_n (z_n) + t_n (g (z_n), u^n) u^n = 0.
  \end{equation}
  Since $\{z_n \} \subset \Omega$ is bounded and $g$ is compact on $\Omega$
  then $(g (z_n), u^n) \rightarrow 0$ and then $A_n (z_n) \rightarrow 0$ which
  implies in turn $z_n \rightarrow \zeta \in \partial \Omega$ and $A (z) = 0$,
  a contradiction. This completes the proof.
\end{proof}

By the aid of the theorem (\ref{stbl-prop}), we define the degree of $A$ on
$\Omega$ at $0$ by the limit in the following definition.

\begin{definition}
  \label{deg-Def}Under the above settings, the degree of $A$ in $\Omega$ at
  $0$ is defined as
  \begin{equation}
    \label{degree} \deg (A, \Omega, 0) = \lim_{n \rightarrow \infty} \deg
    (A_n, \Omega_n, 0) .
  \end{equation}
\end{definition}

In addition, we need to define the class of admissible homotopy for the
generalized degree (\ref{degree}).

\begin{definition}
  The map $h : [0, 1] \times \Omega \rightarrow H$ defined by the relation $h
  (t) (u) = u - G (t) (u)$ is called an admissible homotopy if $h$ satisfies
  the following conditions
  \begin{itemizedot}
    \item $h$ in continuous in $[0, 1] \times \Omega$,
    
    \item The solution set of the equation $h (t) (u) = 0$ lies in $\Omega$
    and furthermore $0 \nin h (t)  (\partial \Omega)$ for all $t \in [0, 1]$,
    
    \item the map $g$ is compact, i.e., $g ([0, 1]) : \Omega \to H$ is
    compact, and in addition for every $\Omega' \subset \Omega$ and
    $\varepsilon > 0$ there exist $\delta = \delta (\varepsilon, \Omega')$
    such that
    \begin{equation}
      |t - s| < \delta \Rightarrow \|g (t) (x) - g (s) (x)\| < \varepsilon,
      \hspace{0.4cm} x \in \Omega' .
    \end{equation}
  \end{itemizedot}
\end{definition}

The above definition of admissible homotopy is very similar to one defined for $(S)_+$ mappings by F. Browder \cite{Browder82}. We
verifies that the definition (\ref{deg-Def}) satisfies the classical properties of
a topological degree. In particular, we are interested in the homotopy
invariance and the solvability properties.

\begin{thrm}
  Under the above setting, the degree $h_t : \Omega \rightarrow H$ is constant
  with respect to $t \in [0, 1]$. Furthermore, if $\deg (h (t), \Omega, 0)
  \neq 0$ for some $t \in [0, 1]$ then there exist $u = u (t) \in \Omega$ such
  that $h (t) (u (t)) = 0$.
\end{thrm}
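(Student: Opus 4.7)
The plan is to reduce everything to the classical Brouwer degree of the finite--rank approximations $h_n(t) \assign u - g_n(t)(u)$, where $g_n(t)$ is the projection of $g(t)$ onto $H_n$ as in (\ref{finit-rank}), and then to invoke the classical homotopy invariance and solvability properties of the Brouwer degree, passing to the limit via Definition \ref{deg-Def}. The scaffolding provided by Theorem \ref{stbl-prop} already shows that for each fixed $t$ the sequence $\deg(h_n(t),\Omega_n,0)$ is eventually constant, so the limit in (\ref{degree}) makes sense; what must be added here is a version of that stability which is uniform in $t \in [0,1]$.

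First I would establish the key boundary non--degeneracy: there exists $N_0$ such that for all $n \geq N_0$ and all $t \in [0,1]$ one has $h_n(t)(u) \neq 0$ for $u \in \partial \Omega_n$. The argument mirrors the one in Theorem \ref{stbl-prop}. Suppose instead that there are sequences $t_n \in [0,1]$ and $z_n \in \partial \Omega_n$ with $h_n(t_n)(z_n)=0$, i.e.\ $z_n = \Pr_{H_n} g(t_n)(z_n)$. Compactness of $g([0,1])$ on $\Omega$ gives, along a subsequence, $t_n \to t^\ast$ and $g(t_n)(z_n) \to \zeta \in H$. Since $\|z_n - g(t_n)(z_n)\| = \|(I-\Pr_{H_n})g(t_n)(z_n)\| \to 0$, we get $z_n \to \zeta$ in $H$, and $\zeta \in \Omega$ by the $L^\infty$ a priori bound (\ref{u-bnd}). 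The uniform equicontinuity condition $\|g(t_n)(z_n) - g(t^\ast)(z_n)\| \to 0$, together with the continuity of $g(t^\ast)$ on $\Omega$, gives $g(t_n)(z_n) \to g(t^\ast)(\zeta)$, hence $\zeta = g(t^\ast)(\zeta)$. Passing to a further subsequence so that $z_n \to \zeta$ pointwise a.e., the boundary condition $\|z_n\|_\infty = \lambda\|\hat K\|_\infty$ forces $\|\zeta\|_\infty = \lambda\|\hat K\|_\infty$, i.e.\ $\zeta \in \partial \Omega$, contradicting the admissibility hypothesis $0 \nin h(t^\ast)(\partial \Omega)$.

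Given this uniform non--degeneracy, the map $(t,u) \mapsto h_n(t)(u)$ is a continuous homotopy on $[0,1] \times \overline{\Omega_n}$ with $0 \nin h_n(t)(\partial\Omega_n)$ for $n \geq N_0$. Classical Brouwer homotopy invariance then yields
\begin{equation*}
  \deg(h_n(0),\Omega_n,0) = \deg(h_n(t),\Omega_n,0) = \deg(h_n(1),\Omega_n,0),
  \qquad n \geq N_0.
\end{equation*}
Letting $n \to \infty$ and using Definition \ref{deg-Def} for each $t$ separately, we conclude $\deg(h(0),\Omega,0) = \deg(h(t),\Omega,0) = \deg(h(1),\Omega,0)$, which is the first assertion. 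Since the argument for intermediate $t$ is identical, constancy on the whole interval follows.

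For the solvability statement, fix $t$ with $\deg(h(t),\Omega,0) \neq 0$. By Definition \ref{deg-Def}, $\deg(h_n(t),\Omega_n,0) \neq 0$ for all $n \geq N_0$, so the classical solvability of the Brouwer degree supplies $u_n \in \Omega_n$ with $h_n(t)(u_n) = 0$, i.e.\ $u_n = \Pr_{H_n} g(t)(u_n)$. Compactness of $g(t)$ on $\Omega$ extracts a subsequence with $g(t)(u_n) \to u^\ast$, and $\|u_n - g(t)(u_n)\| \to 0$ forces $u_n \to u^\ast$ in $H$; the a priori bound (\ref{u-bnd}) places $u^\ast \in \Omega$, and continuity of $g(t)$ yields $u^\ast = g(t)(u^\ast)$, so $h(t)(u^\ast) = 0$.

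The main obstacle is the first step, namely upgrading the stability in Theorem \ref{stbl-prop} to be uniform in $t$: the boundary $\partial\Omega$ is defined by the $L^\infty$ norm while $H$ is an $L^2$ space, so the $H$--convergence $z_n \to \zeta$ does not directly see the $L^\infty$--boundary. The uniform equicontinuity in $t$ built into the definition of admissible homotopy, combined with the measure--theoretic upgrade from $L^2$ to a.e.\ pointwise convergence (along a subsequence), is precisely what is needed to close that gap.
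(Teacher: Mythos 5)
Your proposal is correct and follows essentially the same route as the paper: establish a uniform-in-$t$ boundary non-degeneracy for the finite-rank approximations $h_n(t)$ via the compactness and equicontinuity clauses of the admissibility definition (with the same measure-theoretic upgrade from $L^2$ to a.e.\ convergence to handle the $L^\infty$-defined boundary), then invoke Brouwer homotopy invariance at the finite-dimensional level and pass to the limit through Definition \ref{deg-Def}, and finally extract a solution from the Brouwer solvability property plus compactness. The only cosmetic difference is that you apply Brouwer homotopy invariance directly and let $n\to\infty$, whereas the paper phrases that step as a contradiction between differing limits and the finite-dimensional invariance; the content is identical.
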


\begin{proof}
  We show first that for some $N_0 > 0$ and for $n \geq N_0$, the finite
  rank approximation $h_n$ of the homotopy $h$ satisfies the condition $0 \nin
  h_n ([0, 1])  (\partial \Omega_n)$. Assuming contrary, there exists a
  sequence $t_n$ and $z_n \in \partial \Omega_n$ such that $h_n (t_n) (z_n) =
  0$. Since $t_n$ converges (in a subsequence) to some $\bar{t} \in [0, 1]$
  and since $g ( \bar{t}) : \Omega \rightarrow H$ is compact, we conclude $\{g
  ( \bar{t}) (z_n)\}$ converges (in a subsequence) to some $\zeta \in H$.
  Hence, we can write
  \begin{eqnarray*}
    \|z_n - \zeta \| = \|g_n (t_n) (z_n) - g ( \bar{t}) (z_n) + g ( \bar{t})
    (z_n) - \zeta \| \leq &  & \\
    \leq \|g_n (t_n) (z_n) - g ( \bar{t}) (z_n)\| + \|g ( \bar{t}) (z_n) -
    \zeta \| = &  & \\
    = \|g_n (t_n) (z_n) - g ( \bar{t}) (z_n)\| + o (1) . &  & 
  \end{eqnarray*}
  On the other hand, since $g$ is a compact transformation, we have 
  
  \begin{eqnarray*}
    \|g_n (t_n) (z_n) - g ( \bar{t}) (z_n)\| = \|g_n (t_n) (z_n) - g (t_n)
    (z_n) + g (t_n) (z_n) - g ( \bar{t}) (z_n)\| \leq &  & \\
    \|g_n (t_n) (z_n) - g (t_n) (z_n)\| + \|g (t_n) (z_n) - g ( \bar{t})
    (z_n)\| \leq o (1) . &  & 
  \end{eqnarray*}
  and thus $\|g (t_n) (z_n) - g ( \bar{t}) (z_n)\| \rightarrow 0$.
  Recall that $g_n = \Pr_{H_n} g$, and then by the relation $\|g_n (t_n) (z_n) - g (t_n)
  (z_n)\| \rightarrow 0$ we conclude $z_n \xrightarrow{H} \zeta$ (in a
  subsequence). A measure theoretic argument that we have used in the proof of
  the Theorem (\ref{stbl-prop}) implies $\zeta \in \partial \Omega$. We
  conclude finally $0 = h ( \bar{t}) (\zeta)$ for $\zeta \in \partial \Omega$
  which contradicts the second assumption on $h$. Furthermore, one can employ an argument
  similar to one presented in the proof of the Theorem(\ref{stbl-prop}) to shows that the degree $\deg (h_n
  (t), \Omega_n, 0)$ is stable with respect to $n$ for any $t \in [0, 1]$.
  This allows us to write 
  \begin{equation}
    \deg (h (t), \Omega, 0) = \deg (h_n (t), \Omega_n, 0),
  \end{equation}
  for sufficiently large $n$. Now assume that for $t_1, t_2 \in [0, 1]$, we have
  \begin{equation}
    \deg (h (t_1), \Omega, 0) \neq \deg (h (t_2), \Omega, 0) .
  \end{equation}
  Thus we can choose $n$ so large that 
  \begin{equation}
    \deg (h_n (t_1), \Omega_n, 0) \neq \deg (h_n (t_2), \Omega_n, 0) .
  \end{equation}
  On the other hand, since $0 \nin h_n ([0, 1])  (\partial \Omega)$, the
  homotopy invariance property of the Brouwer degree implies
  \begin{equation}
    \deg (h_n (t_1), \Omega_n, 0) = \deg (h_n (t_2), \Omega_n, 0) .
  \end{equation}
  which is a contradiction.
  
  Now we prove the second part of the theorem. Assume $\text{deg}(h(t),\Omega,0)=0$ for a fixed $t\in [0,1]$.
 According to the definition (\ref{degree}) and the solvability property of the Brouwer degree, there exists a sequence $(u_n (t)) \subset \Omega_n$ such that $h_n (t)  (u_n (t)) = 0$. Since $g$ is compact, the sequence $(g (t, u_n (t)))$
  converges (in a subsequence) to some $u (t) \in H$. This implies that $g_n
  (t, u_n (t))$ converges to $u (t)$ and therefore $u_n (t) \xrightarrow{H} u
  (t) \in \Omega$. Since $h$ is continuous, we have $h (t) (u (t)) = 0$ and
  this completes the proof.
\end{proof}

\subsection{Phase Transition}

We prove that thres is $\lambda_0 > 0$ such that the isotropic phase is the unique solution to the system
(\ref{U}) and (\ref{f}) for $\lambda < \lambda_0$. In addition we derive a sequence of critical values $\lambda_n$ for which 
the axisymmetric nematic phases will bifurcate from the trivial solution. First the following lemma.

\begin{lem}
  \label{iso-lem}Let $L : H_0 (S^{D - 1}) \rightarrow H_0 (S^{D - 1})$ be the
  map
  \begin{equation}
    \label{L} L (u) (r) = \frac{- 1}{\sigma_D}  \int_{S^{D - 1}} \hat{K}
    (\gamma) u (r') d \nocomma \sigma (r') .
  \end{equation}
  If $\lambda$ is not a characteristic value of $L$ then $\bar{u} = 0$ is an
  isolated solution of $A(u)=0$.
\end{lem}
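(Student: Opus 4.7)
The plan is to argue by contradiction, using that the Fr\'echet derivative of $G$ at $0$ equals $L$ and then exploiting compactness of $L$ on $H_0(S^{D-1})$.

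First I would verify the linearization. At $u=0$ one has $\beta(0)=\sigma_D$, and $G(0)=0$ because by (\ref{Khat}) the kernel $\hat K$ has zero mean in $r'$. Expanding $e^{-u}=1-u+\tfrac{1}{2}u^{2}+\cdots$ and using $\int u\,d\sigma=0$ for $u\in H_0$, a direct computation yields
\[
  G(u)=L(u)+R(u),\qquad \|R(u)\|_{\infty}\le C\,\|u\|_{\infty}^{2},
\]
uniformly on the set $\Omega(\lambda)$ of Theorem (\ref{Hw}).

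Suppose for contradiction that there is a sequence $u_n\to 0$ in $H_0$ with $u_n\neq 0$ and $A(u_n)=0$. The a priori bound (\ref{u-bnd}) gives $\|u_n\|_\infty\le\lambda\|\hat K\|_\infty$. Using once more $\int\hat K(r,\cdot)\,d\sigma=0$, the fixed-point identity rewrites as
\[
  u_n(r)=\lambda\,\beta(u_n)^{-1}\!\int_{S^{D-1}}\hat K(r,r')\bigl(e^{-u_n(r')}-1\bigr)\,d\sigma(r'),
\]
and since $|e^{-u_n}-1|\le e^{\|u_n\|_\infty}|u_n|$, the Cauchy--Schwarz inequality combined with the lower bound $\beta(u_n)\ge e^{-\lambda\|\hat K\|_\infty}\sigma_D$ delivers the uniform estimate $\|u_n\|_\infty\le C\,\|u_n\|_{L^2}$; in particular $\|u_n\|_\infty\to 0$.

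Now I rescale: set $w_n\assign u_n/\|u_n\|_{L^2}$ so that $\|w_n\|_{L^2}=1$ and $\|w_n\|_\infty$ stays bounded. Dividing $u_n=\lambda L(u_n)+\lambda R(u_n)$ by $\|u_n\|_{L^2}$ gives
\[
  w_n=\lambda\,L(w_n)+\rho_n,\qquad \|\rho_n\|_{L^2}\le C\,\|u_n\|_\infty^{2}/\|u_n\|_{L^2}\le C'\,\|u_n\|_{L^2}\to 0,
\]
where the second inequality uses the bootstrap $\|u_n\|_\infty\le C\|u_n\|_{L^2}$. Since $L$ is Hilbert--Schmidt on $H_0(S^{D-1})$, the sequence $L(w_n)$ has a subsequence converging in $L^2$; the displayed identity then forces $w_n\to w$ in $L^2$ with $\|w\|_{L^2}=1$, and passing to the limit yields $w=\lambda L(w)$. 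Hence $1/\lambda$ is an eigenvalue of $L$ with eigenvector $w\neq 0$, contradicting the hypothesis.

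The main obstacle is precisely the fact pointed out just before Theorem (\ref{Hw}): $G$ is \emph{not} continuously differentiable on $H_0$, so a direct implicit function theorem argument is unavailable. The crucial technical step that sidesteps this is the $L^\infty$--$L^2$ bootstrap estimate $\|u_n\|_\infty\le C\|u_n\|_{L^2}$ extracted from the fixed-point equation itself, which upgrades an $H_0$-small solution into a uniformly small one and thereby makes the formal linearization rigorous.
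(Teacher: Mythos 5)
Your proof is correct and follows essentially the same route as the paper: both linearize $G$ at $0$ to $L$ with an $o(\|u\|_{L^2})$ remainder and then exploit that $\mathrm{Id}-\lambda L$ is bounded below when $\lambda$ is not a characteristic value of the compact operator $L$ --- your normalization-and-subsequence extraction is in effect an in-line proof of that coercivity bound, which the paper simply invokes. The only substantive difference is that the paper runs the contradiction argument for a sequence $(\lambda_n,u_n)$ with $\lambda_n\to\lambda$, i.e.\ it establishes isolation of $(\lambda,0)$ in the $(\lambda,u)$-plane, which is the form of the conclusion actually used later in Lemma \ref{bif-lem}.
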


\begin{proof}
 It is simply verified by the dominant convergence theorem that if  $u \in 2 \Omega(\lambda)$ then
    \begin{equation}
    \|G (u) - L (u)\|_{L^2 (S^{D - 1})} = o (\|u\|_{L^2 (S^{D - 1})}) .
  \end{equation}
Fix $\lambda$ a non-characteristic value of $L$. If $\bar{u} = 0$ is not isolated then choose a sequence $(\lambda_n, u_n)$
  such that $\lambda_n \rightarrow \lambda$, $u_n \rightarrow 0$ and $u_n = g (u_n)$. On the other hand, we have
  \begin{equation}
    0 = \|u_n - g (u_n)\| \geq \|u_n - \lambda L (u_n)\| - | \lambda -
    \lambda_n | \|L\| \|u\| -\|g (u_n) - \lambda_n \nocomma L (u_n) \|.
  \end{equation}
  Since $\lambda$ is not a characteristic value of $L$, there exist $k > 0$
  such that \[\|u_n - \lambda L (u_n)\| > k \|u_n \|.\] Take $| \lambda_n -
  \lambda |$ very small and then
  \begin{equation}
    k \|u_n \| \noplus + o (\|u_n \|) < 0,
  \end{equation}
  that is a contradiction.
\end{proof}

\begin{thrm}
  Under the above setting, there exist $\lambda_0 > 0$, such that the equation $A(u)=0$ has the unique solution $\bar{u} = 0$ in the class of axially symmetric solutions for $0 < \lambda <  \lambda_0$.
\end{thrm}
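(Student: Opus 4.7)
The plan is to combine the uniform \emph{a priori} bound $\|u\|_{\infty}\leq \lambda\|\hat K\|_{\infty}$ from Theorem \ref{Hw} with a quantitative version of the linearization argument already used in Lemma \ref{iso-lem}, restricted to the axially symmetric subspace of $H_0(S^{D-1})$.

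First I would restrict to the subspace $H_0^{\mathrm{ax}}\subset H_0(S^{D-1})$ of axially symmetric functions around a fixed axis $\bar r$, which is spanned by the Legendre polynomials $\{P_{2n}(D,r\cdot\bar r)\}_{n\geq 1}$. Because $\hat K$ is zonal, both $G$ and $L$ preserve $H_0^{\mathrm{ax}}$, and the Funk--Hecke formula applied to the expansion (\ref{K-expan}) diagonalizes $L\bigl|_{H_0^{\mathrm{ax}}}$: each $P_{2n}$ is an eigenfunction with eigenvalue $\mu_n$ proportional to $k_n/N(D,2n)$. By Proposition \ref{Prop-K}, the sequence $\{\mu_n\}$ is positive, bounded, and tends to $0$, so $\mu^{\ast}\assign\max_n \mu_n$ is attained. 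Set $\lambda^{\ast}\assign 1/\mu^{\ast}$, the smallest characteristic value of $L\bigl|_{H_0^{\mathrm{ax}}}$; on the interval $(0,\lambda^{\ast})$ the map $I-\lambda L$ is boundedly invertible on $H_0^{\mathrm{ax}}$ with an inverse whose norm $M(\lambda)$ is continuous in $\lambda$ and uniformly bounded on any compact subinterval.

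Next I would Taylor-expand $G$ about $\bar u=0$. Using the identities $\int_{S^{D-1}}\hat K(r,r')\,d\sigma(r')=0$ and $\int u\,d\sigma=0$, and writing $\beta(u)^{-1}=\sigma_D^{-1}+O(\|u\|)$ and $e^{-u(r')}=1-u(r')+O(u(r')^{2})$, one obtains
\[
G(u)=L(u)+R(u),\qquad \|R(u)\|_{L^{2}}\leq C_{0}\,e^{\|u\|_{\infty}}\,\|u\|_{L^{2}}^{2},
\]
with $C_{0}$ depending only on $\|\hat K\|_{\infty}$ and $\sigma_{D}$. For any axially symmetric solution $u$ of $A(u)=0$ in $\Omega(\lambda)$, rewrite the equation as $(I-\lambda L)u=\lambda R(u)$. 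Combining the inverse bound, the quadratic estimate, and the a priori bound $\|u\|_{L^{2}}\leq \sqrt{\sigma_{D}}\,\lambda\,\|\hat K\|_{\infty}$ yields
\[
\|u\|_{L^{2}}\;\leq\; M(\lambda)\,\lambda\,C_{0}\,e^{\lambda\|\hat K\|_{\infty}}\,\|u\|_{L^{2}}^{2}\;\leq\; M(\lambda)\,C_{1}(\lambda)\,\lambda^{2}\,\|u\|_{L^{2}},
\]
where $C_{1}(\lambda)$ stays bounded as $\lambda\downarrow 0$. I then choose $\lambda_{0}>0$ small enough that $\lambda_{0}<\lambda^{\ast}$ and $M(\lambda_{0})\,C_{1}(\lambda_{0})\,\lambda_{0}^{2}<1$; for every $\lambda\in(0,\lambda_{0})$ the inequality forces $\|u\|_{L^{2}}=0$, giving $u=\bar u=0$.

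The main obstacle I anticipate is bookkeeping rather than conceptual: one has to ensure that the quadratic remainder estimate for $R(u)$ really comes out in the $L^{2}$ norm in which $L$ was diagonalized, and that $M(\lambda)$ is under control uniformly on a full neighbourhood of $0$. Both are guaranteed once one knows the spectral structure of $L\bigl|_{H_{0}^{\mathrm{ax}}}$, which is supplied by the Funk--Hecke formula together with the strict decrease $k_{n}>k_{n+1}$ and $k_{n}\to 0$ established in Proposition \ref{Prop-K}. No new topological input beyond the generalized degree already developed in the previous subsection is needed for this uniqueness statement; the degree machinery becomes indispensable only when $\lambda$ exceeds $\lambda^{\ast}$ and bifurcating nematic solutions must be detected.
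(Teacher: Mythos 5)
Your argument is correct, but it takes a genuinely different route from the paper's. The paper proves uniqueness by degree counting: it first shows $\deg(\mathrm{Id}-g,B_R,0)=+1$ via the homotopy $u-tg(u)$, then uses a Gr{\"u}ss-type inequality to bound the Jacobian of $g$ at an \emph{arbitrary} axially symmetric solution by $\lambda k_m$ in each Legendre mode, concluding that every such solution has index $+1$ whenever $\lambda<\lambda_0=\left(\sum_m k_m\right)^{-1}$; since the total degree is $+1$ and each solution contributes $+1$, only the trivial solution can exist. You instead run a quantitative linearization: writing $(I-\lambda L)u=\lambda R(u)$ with $\|R(u)\|=O(\|u\|^2)$ (which is valid, using $\int_{S^{D-1}}\hat K(r,\cdot)\,d\sigma=0$ and $\int u\,d\sigma=0$ exactly as you indicate), invertibility of $I-\lambda L$ below the first characteristic value, and the a priori bound $\|u\|_\infty\le\lambda\|\hat K\|_\infty$ together force $\|u\|\le M(\lambda)C_1(\lambda)\lambda^2\|u\|$, hence $u=0$ for small $\lambda$; the spectral facts you need ($\mu_n=k_n/N(D,2n)>0$ and $\mu_n\to 0$ because $N(D,2n)\to\infty$ for $D\ge 3$) do follow from Proposition \ref{Prop-K} and the Funk--Hecke formula. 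What the paper's route buys is an explicit threshold $\left(\sum_m k_m\right)^{-1}$ that does not depend on quadratic-remainder constants, plus the index computations that are reused in the bifurcation theorem. What your route buys is that no degree theory is needed for the uniqueness statement at all, and --- since $L$ is diagonalized by spherical harmonics on all of $H_0(S^{D-1})$, not just on the axially symmetric subspace --- your argument in fact yields uniqueness among \emph{all} solutions (the content of Theorem \ref{appthm}), albeit with a non-explicit and generally smaller $\lambda_0$.
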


\begin{proof}
  Let $\sigma_D$ stands for the surface of the unit sphere in $\mathbb{R}^D$. For $R = \lambda \sqrt{\sigma_D} \| \hat{K} \|_{\infty}$ the equation
  \begin{equation}
    u - tg (u) = 0,
  \end{equation}
  has no solution on the sphere $S_R$ for $t \in [0, 1]$. In fact we have
  $|| u ||_{\infty} \leq R / \sqrt{\sigma_D}$ and $\|u\|_{L^2} \leq$R. By the
  homotopy invariance property of degree we conclude:
  \begin{equation}
    \label{deg} \deg ({\rm Id} - g, B_R, 0) = \deg ({\rm Id} - tg, B_R, 0) =
    \deg ({\rm Id}, B_R, 0) = + 1
  \end{equation}
  We show that the index of the trivial solution $\bar{u}$ is $+ 1$. We use the expansion of functions in $H_0 (S^{D - 1})$ 
  in terms of the orthonormal spherical harmonics $\{S_{2 n \nocomma j} (D, r)\}$ for $j = 1,
  \ldots, N (D, n)$. Let us write for $u \in H_0 (S^{D - 1})$ the expansion 
  \begin{equation}
    \label{u-S} u (r) = \sum_{n = 1}^{\infty} \sum_{j = 1}^{N (D, 2 n)} u_{n
    \nocomma j} S_{2 n \nocomma j} (D, r) ,
  \end{equation}
  for some  coefficients $u_{n  j}$.  With regards to (\ref{u-S}), it is more convenient in our calculations to write $u = u (u_{n \nocomma
  j})$. Calculation of the entries of the Jacobian matrix of $g$ at $u =
  \bar{u}$ gives
  \begin{eqnarray*}
    \frac{\partial}{\partial u_{n \nocomma j}} g ( \bar{u}) = -
    \frac{\lambda}{\sigma_D}  \int_{S^{D - 1}} \hat{K} (\gamma) S_{2 n
    \nocomma j} (D, r') d \nocomma \sigma (r') = &  & \\
    \frac{\lambda k_n}{\sigma_D}  \int_{S^{D - 1}} P_{2 n} (D, \cos \nocomma
    \gamma) S_{2 n \nocomma j} (D, r') d \sigma (r') = &  & \\
    = \frac{\lambda k_n}{N (D, 2 n)} S_{2 n \nocomma j} (D, r) . &  & 
  \end{eqnarray*}
  This implies that the infinite dimensional Jacobian matrix of $g ( \bar{u})$ in the
  bases of $\{S_{n  j} \}$ with $n$ an even number has the form
  \begin{equation}
    J_G = {\rm diag} \left( \frac{\lambda k_n}{N (D, 2 n)} \right) .
  \end{equation}
  Therefore, if $\tilde{\lambda}_0 \leq Dk_1^{- 1}$, then $\text{ind} ( \bar{u}, \lambda) = 1 $  if $0 < \lambda <  \tilde{\lambda}_0$. The axially symmetric
solutions are functions which are symmetric with respect to the rotation of
$S^{D - 2}$ around any point of $S^{D - 1}$. For the fixed $r \in S^{D - 1}$,
if $\theta$ denotes the angel between arbitrary point $r' \in S^{D - 1}$ and
$r$, then $u$ can be expanded in terms of Legendre polynomials $P_{2 n}$ as
\begin{equation}
  \label{axisym-u} u (\theta) = \sum_{n = 1}^{\infty} u_n P_{2 n} (D, \cos
  \theta) .
\end{equation}
In this case $g$ has the simpler form:
\begin{equation}
  g (u) (\theta) = \lambda \int_0^{\pi} \hat{K} (\gamma)  \tilde{g} (\theta')
  d \nocomma \theta',
\end{equation}
where $\tilde{g}$ is defined as
\begin{equation}
  \tilde{g} (\theta) = \frac{e^{- u (\theta)} \sin^{D - 2}
  (\theta)}{\int_0^{\pi} e^{- u (\theta)} \sin^{D - 2} (\theta) d \nocomma
  \theta} .
\end{equation}
The calculations reduces to what we have carried out for the Onsager problem
in case $D = 2$, see \cite{Nik15}. The Jacobian matrix entries for the
trivial solution is obtained as:
\begin{equation}
  \label{ubar-der} \frac{\partial}{\partial u_n} g ( \bar{u}) = - \lambda
  \frac{\sigma_{D - 1}}{\sigma_D}  \int_0^{\pi} \hat{K} (\gamma) P_{2 n} (D,
  \cos \nocomma \theta') \sin^{D - 2} \nocomma (\theta') d \nocomma \theta' =
  \frac{\lambda k_n}{N (D, 2 n)} P_{2 n} (D, \cos \nocomma \theta) .
\end{equation}
The calculation for a non-trivial solutions also is carried out as
\begin{eqnarray*}
  \left\langle \frac{\partial}{\partial u_{n \nocomma}} g (u), P_{2 m} (D,
  \cos \theta) \right\rangle = \lambda k_m  \left\{ \int_0^{\pi} \tilde{g}
  (\theta) P_{2 n} (D, \cos \nocomma \theta) P_{2 m} (D, \cos \nocomma \theta)
  d \nocomma \theta - \right. &  & \\
  - \left. \int_0^{\pi} \tilde{g} (\theta) P_{2 n} (D, \cos \theta) d \nocomma
  \theta \int_0^{\pi} \tilde{g} (\theta) P_{2 m} (D, \cos \nocomma \theta) d
  \nocomma \theta \right\} . &  & 
\end{eqnarray*}
The bracket in the right hand side of the above identity can be calculated
using a Gruss type inequality \cite{Dra00} which states that for
functions $a, b \in L^{\infty} (D)$ and the probability measure $\mu$, we have
the inequality
\begin{equation}
  \left| \int_D a (x) b (x) d \nocomma \mu - \left( \int_D a (x) d \nocomma
  \mu \right) \left( \int_D b (x) d \nocomma \mu \right) \right| \leq
  \|a\|_{L^{\infty}} \|b\|_{L^{\infty}} .
\end{equation}
According to the above inequality we obtain
\[ \left| \int_0^{\pi} \tilde{g} (\theta) P_{2 n} (D, \cos \nocomma \theta)
   P_{2 m} (D, \cos \nocomma \theta) d \nocomma \theta - \int_0^{\pi}
   \tilde{g} (\theta) P_{2 n} (D, \cos \theta) d \nocomma \theta \int_0^{\pi}
   \tilde{g} (\theta) P_{2 m} (D, \cos \nocomma \theta) d \nocomma \theta
   \right| \leq 1, \]
and then finally we reach
\begin{equation}
  \label{u-der} \left| \left\langle \frac{\partial}{\partial u_{n \nocomma}} g
  (u), P_{2 m} (D, \cos \theta) \right\rangle \right| \leq \lambda k_m .
\end{equation}
The above calculation establishes the fact ${\rm ind} (u, \lambda) = + 1$ for
any axially symmetric solution $u$ of $A(u)=0$ and for $0 < \lambda <\lambda_0$ where
\begin{equation}
  \label{l0} \lambda_0 = \left( \sum_{m = 1}^{\infty} k_m \right)^{-
  1} .
\end{equation}
The bound (\ref{l0}) is obtained based on the assumption $k_n > 0$ in
(\ref{kn}). Since the degree of $A$ is $+ 1$ according to (\ref{deg}), we conclude that
  the trivial solution $\bar{u}$ is the unique solution in the class of axially symmetric solutions for $0 < \lambda <
  \lambda_0$.
\end{proof}
It can be shown that the above theorem holds in general case (not necessarily for axially symmetric solutions). The calculation in this case gives a bound $\tilde{\lambda}_0 < \lambda_0$. We have the following theorem.
\begin{thrm}\label{appthm}
  The equation $A(u)=0$ has no non-trivial solution for $0 < \lambda < \tilde{\lambda}_0$ where $\tilde{\lambda}_0=\frac{1}{5} \| \hat{K} \|_{\infty}^{- 1}$.
\end{thrm}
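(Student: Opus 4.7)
The plan is to replicate the strategy of the previous theorem---the degree of $A$ on the ball $B_R$ equals $+1$ via the homotopy $t\mapsto I-tg$, the trivial solution $\bar u=0$ has index $+1$ for small $\lambda$, and any hypothetical nontrivial solution must also carry index $+1$, so additivity of the generalized degree forces uniqueness---but without using the axial-symmetry reduction. The price for dropping that reduction is that the Fr\'echet derivative of $g$ must be controlled on all of $H_0(S^{D-1})$ rather than on the one-dimensional Legendre block, which shrinks the admissible range from $\lambda_0$ to $\tilde\lambda_0=(5\|\hat K\|_\infty)^{-1}$.

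The first step is to compute $g'(V)$ at an arbitrary fixed point $V=g(V)$. Direct differentiation of $g(V)(r)=\lambda\beta(V)^{-1}\int_{S^{D-1}}\hat K(r,r')e^{-V(r')}\,d\sigma(r')$ together with the identity $V=\lambda G(V)$ yields
\[
g'(V)[v](r)=-\lambda\int_{S^{D-1}}\hat K(r,r')\bigl(v(r')-\bar v\bigr)\tilde u(r')\,d\sigma(r'),
\]
where $\tilde u=e^{-V}/\beta(V)$ is a probability density on $S^{D-1}$ and $\bar v=\int v\tilde u\,d\sigma$ is the corresponding mean. Equivalently, $g'(V)[v](r)$ is $-\lambda$ times the $\tilde u$-covariance of $\hat K(r,\cdot)$ and $v$.

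The second step is to bound $\|g'(V)\|_{L^2\to L^2}$. Cauchy--Schwarz against the probability measure $\tilde u\,d\sigma$ gives $|g'(V)[v](r)|^2\leq\lambda^2\|\hat K\|_\infty^2\int v^2\tilde u\,d\sigma$. Because $V\in\Omega(\lambda)$, the uniform bound $|V|\leq\lambda\|\hat K\|_\infty$ yields $\|\tilde u\|_\infty\leq\sigma_D^{-1}e^{2\lambda\|\hat K\|_\infty}$, and integrating in $r$ produces
\[
\|g'(V)\|^2_{L^2\to L^2}\leq \lambda^2\|\hat K\|_\infty^{2}\,e^{2\lambda\|\hat K\|_\infty}.
\]
For $\lambda<\tilde\lambda_0$ the right-hand side is strictly less than $1$; in particular every finite-rank restriction $g_n'(V)$ has operator norm less than $1$ on $H_n$, so $I-g_n'(V)$ is invertible with positive determinant and its Brouwer index is $+1$ at each sufficiently large finite level. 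Passing to the limit in Definition~\ref{deg-Def} yields index $+1$ for $V$ in the generalized degree.

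Assembling the pieces, $\deg(A,B_R,0)=+1$ as in the previous theorem, $\mathrm{ind}(\bar u,\lambda)=+1$ by the same derivative computation evaluated at $V=0$, and any putative nontrivial $V\in\Omega(\lambda)$ also contributes $+1$; additivity of the generalized degree then forbids such a solution, while every fixed point must lie in $\Omega(\lambda)$ by the a priori bound \eqref{u-bnd}. The main technical obstacle is the second step: one needs the clean covariance form of $g'(V)$ so that Cauchy--Schwarz produces an estimate quadratic in $\lambda$ (rather than linear), and one must keep the bookkeeping of the exponential factor $e^{2\lambda\|\hat K\|_\infty}$ tight enough to hit the explicit threshold $\tilde\lambda_0=(5\|\hat K\|_\infty)^{-1}$ rather than a weaker constant.
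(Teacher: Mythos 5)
Your proposal is correct and follows the same overall architecture as the paper's appendix (total degree $+1$ on $B_R$, index $+1$ at every possible solution, hence uniqueness), and the object you linearize is the same: your covariance formula $g'(V)[v](r)=-\lambda\int\hat K(r,r')(v-\bar v)\tilde u\,d\sigma$ is exactly the operator whose matrix entries $b_{nj}^{ml}$ the paper computes in the spherical-harmonic basis. Where you genuinely diverge is in how the norm of this linearization is controlled. The paper bounds each matrix entry separately, using the Legendre expansion of $\hat K$ to pull out the factors $\lambda k_m/N(D,2m)$, and then sums over modes to arrive at the condition $\lambda e^{4\lambda\|K\|_\infty}\sum_m k_m<\tfrac12$ (with $\sum_m k_m\le\|\hat K\|_\infty$); this is an $\ell^1$-type entrywise estimate that depends on the positivity and summability of the coefficients $k_m$ from Proposition \ref{Prop-K}. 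You instead estimate $\|g'(V)\|_{L^2\to L^2}$ directly by Cauchy--Schwarz against the probability measure $\tilde u\,d\sigma$, which needs only $\|\hat K\|_\infty<\infty$ and the a priori bound $|V|\le\lambda\|\hat K\|_\infty$, and yields the cleaner condition $\lambda\|\hat K\|_\infty e^{\lambda\|\hat K\|_\infty}<1$. This is comfortably satisfied at $\tilde\lambda_0=(5\|\hat K\|_\infty)^{-1}$ (the left side is about $0.24$), so your route actually proves the theorem with room to spare and would tolerate a larger threshold, while being independent of the kernel expansion. One bonus worth noting: since your bound $\|g'(V)\|<1$ holds uniformly for all $V$ in the convex set $\Omega(\lambda)$, not just at solutions, a mean-value argument already makes $g$ a contraction on $\Omega(\lambda)$ and gives uniqueness without invoking the degree at all; both your write-up and the paper's rely implicitly on additivity/excision of the generalized degree, which the paper states only for homotopy invariance and solvability, so the contraction observation would actually close that small foundational gap.
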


The proof is based on the fact that the index of every solution of $(A(u)=0$ is $+1$ for $0<\lambda <\tilde{\lambda}_0$. The calculation is given in the appendix. 

To prove the existence of nematic phases for the Onsager model
(\ref{U}) and (\ref{f}), we use the following lemma and the degree argument we
established in the previous section.

\begin{lem}
  \label{bif-lem} Let $\sigma (L)$ be
  the spectrum of the map $L$ defined in $\left( \ref{L} \right)$ and let $\bar{\lambda} \in \sigma (L)$. If for the
  pair $(\lambda, \mu)$ where $0 < \lambda < \bar{\lambda} < \mu$ we have
  \begin{equation}
    {\rm ind} ({\rm Id} - \lambda L, 0) {\rm ind} ({\rm Id} - \mu
    L, 0) < 0,
  \end{equation}
  then $\bar{\lambda}$ is a bifurcation point for $\left( \ref{ulG} \right)$.
\end{lem}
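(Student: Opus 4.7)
The plan is the classical Krasnoselski bifurcation argument, transposed into the generalized degree constructed in Section~2. Arguing by contradiction, suppose $\bar\lambda$ is not a bifurcation point of $A(u)=u-\tau G(u)=0$. Then there exist $\epsilon,r_0>0$ such that the only solution with $\tau\in[\bar\lambda-\epsilon,\bar\lambda+\epsilon]$ and $\|u\|\le r_0$ is $u=0$. Since $L$ is a compact operator (integration against a bounded kernel), $\sigma(L)$ is discrete away from $0$, so I may shrink $\epsilon$ and select $\lambda'\in(\bar\lambda-\epsilon,\bar\lambda)$ and $\mu'\in(\bar\lambda,\bar\lambda+\epsilon)$ with $\lambda',\mu'\notin\sigma(L)$ and with $\bar\lambda$ the only characteristic value in $[\lambda,\mu']\cup[\lambda',\mu]$.

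First I apply homotopy invariance of the generalized degree to $h_1(\tau)(u)=u-\tau G(u)$ on $[\lambda',\mu']\times\bar B_{r_0}$. Admissibility is immediate, since $\tau G$ is a scalar rescaling of the single compact map $G$ of Theorem~\ref{Hw}. The non-bifurcation assumption gives $0\notin h_1(\tau)(\partial B_{r_0})$, hence
\begin{equation*}
\deg(I-\lambda'G,B_{r_0},0)=\deg(I-\mu'G,B_{r_0},0).
\end{equation*}

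Next, for each $\tau\in\{\lambda',\mu'\}$ I replace $G$ by its linearization $L$. The estimate $\|G(u)-L(u)\|=o(\|u\|)$ proved inside Lemma~\ref{iso-lem}, combined with the invertibility bound $\|u-\tau L(u)\|\ge c_\tau\|u\|$ (which holds because $\tau\notin\sigma(L)$), shows that the linear interpolation $h_2(s)(u)=u-\tau[(1-s)G(u)+sL(u)]$ satisfies $\|h_2(s)(u)\|\ge c_\tau r_0-o(r_0)>0$ on $\partial B_{r_0}$ once $r_0$ is small enough. Homotopy invariance then yields
\begin{equation*}
\deg(I-\tau G,B_{r_0},0)=\deg(I-\tau L,B_{r_0},0)=\mathrm{ind}(I-\tau L,0).
\end{equation*}
Because $L$ is compact, $\tau\mapsto\mathrm{ind}(I-\tau L,0)$ is constant on each component of $\mathbb{R}\setminus\sigma(L)$, and by the choice of $\lambda',\mu'$ these indices coincide with $\mathrm{ind}(I-\lambda L,0)$ and $\mathrm{ind}(I-\mu L,0)$ respectively.

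Chaining the three displays produces $\mathrm{ind}(I-\lambda L,0)=\mathrm{ind}(I-\mu L,0)$, contradicting the hypothesis that their product is strictly negative. I expect the main obstacle to be the admissibility verification for the generalized degree, which carries a nontrivial list of conditions (continuity, uniform-in-$t$ compactness, and stability under the finite-rank truncations from Theorem~\ref{stbl-prop}); both $h_1$ and $h_2$ are built from one compact map, so these checks should go through, but the degree theory was set up on the global set $\Omega$ and must first be localized to the small ball $\bar B_{r_0}$ via a brief excision-style observation.
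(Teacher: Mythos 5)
Your proposal is correct and follows essentially the same route as the paper: the standard Krasnoselskii index-jump argument, deducing from the non-bifurcation hypothesis that no solutions lie on a small sphere for $\tau$ near $\bar{\lambda}$, then invoking homotopy invariance of the generalized degree to force ${\rm ind}({\rm Id}-\lambda L,0)={\rm ind}({\rm Id}-\mu L,0)$, contradicting the sign condition. The paper's version is far terser and leaves implicit both the homotopy from $G$ to its linearization $L$ (which you justify via the $o(\|u\|)$ estimate from Lemma \ref{iso-lem}) and the admissibility checks for the generalized degree on the small ball, so your additional detail is a faithful expansion rather than a different method.
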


\begin{proof}
  Assuming contrary, the value $( \bar{\lambda}, 0)$ is isolated due to the lemma (\ref{iso-lem}) and
  then there exists $\varepsilon > 0$ such that for $\lambda \in (
  \bar{\lambda} - \varepsilon, \bar{\lambda} + \varepsilon)$, the trivial
  solution $\bar{u}$ is the unique solution of $A (u) = 0$ for $u \in
  B_{\varepsilon} ( \bar{u})$ . This implies that there is no solution laying
  on $\partial B_{\varepsilon / 2}$ for $\lambda \in ( \bar{\lambda} -
  \varepsilon, \bar{\lambda} + \varepsilon)$ and then by the homotopy
  invariance property of the degree (\ref{degree}), the index ${\rm ind}
  ({\rm Id} - \lambda L, 0)$ is constant that is a contradiction.
\end{proof}

By the Lemma (\ref{bif-lem}) and calculations
(\ref{ubar-der}) and (\ref{u-der}), we are able to prove the existence of
infinitely many axially symmetric nematic phases of the Onsager model. All
nematic phases are bifurcation solutions of (\ref{ulG}) from the trivial
solution $\bar{u} = 0$. The argument is completely similar to one we employed
for the model in $D = 2$, see \cite{Nik15}. In particular we have the
following theorem.

\begin{thrm}
  There exists a sequence of axially symmetric solution of $\left( \ref{ulG}
  \right)$ in $H_0 (S^{D - 1})$ bifurcating from the trivial solution 
  $\bar{u}$ at the critical values \[\lambda_n = N (D, 2 n) k_n^{- 1}.\] The
  multiplicity of the bifurcating solutions at each bifurcating point
  $\lambda_n$ is exactly equal 2 and the first bifurcation solution is stable.
\end{thrm}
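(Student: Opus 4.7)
The plan is to apply the bifurcation Lemma~\ref{bif-lem} to the linearization $L$ after restricting the operator $A$ to the axisymmetric subspace of $H_0(S^{D-1})$. By expansion (\ref{axisym-u}), this subspace is spanned by the Legendre polynomials $\{P_{2n}(D,\cos\theta)\}_{n\geq 1}$, and identity (\ref{ubar-der}) shows each such basis element is an eigenvector of $L$ with eigenvalue $k_n / N(D, 2n)$. Proposition~\ref{Prop-K} ensures that $(k_n)_{n\geq 1}$ is a positive, strictly decreasing sequence, while $N(D, 2n)$ is strictly increasing in $n$, so the characteristic values $\lambda_n = N(D, 2n) k_n^{-1}$ are all distinct and simple, with the kernel of ${\rm Id} - \lambda_n L$ on the axisymmetric subspace spanned by $P_{2n}(D,\cos\theta)$.

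Next I would compute the index of the trivial solution. Using the finite-dimensional approximations that define the degree in (\ref{degree}) together with the diagonal structure of the Jacobian,
\begin{equation*}
{\rm ind}({\rm Id}-\lambda L, 0) = (-1)^{m(\lambda)},
\end{equation*}
where $m(\lambda) = \#\{n\geq 1 : \lambda k_n/N(D,2n) > 1\}$. As $\lambda$ increases across $\lambda_n$, exactly one additional eigenvalue crosses unity, so the index flips sign, and Lemma~\ref{bif-lem} produces bifurcation at each $\lambda_n$. The multiplicity claim then follows from Crandall--Rabinowitz local bifurcation at the simple kernel: near $(\lambda_n, 0)$ the nontrivial solution set is a smooth curve $(\lambda(s), u(s))$ with $u(s) = s\,P_{2n}(D,\cos\theta) + o(s)$, and the two signs of $s$ produce exactly the two claimed branches.

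The main obstacle will be the stability of the first bifurcating solution. Following the $D=2$ template of \cite{Nik15}, I would perform a Lyapunov--Schmidt reduction of (\ref{ulG}) onto the one-dimensional kernel at $\lambda_1$. The reduced bifurcation equation takes the form $\lambda(s) = \lambda_1 + c\,s^2 + O(s^4)$, with coefficient $c$ expressible through $k_1, k_2$ and the triple-product integral $\int_0^\pi P_2^3(D,\cos\theta)\sin^{D-2}\theta\,d\theta$. The ratio estimate (\ref{kn+1-kn}) ensures $c > 0$, so the bifurcation at $\lambda_1$ is supercritical, and the exchange-of-stability principle (cf.\ \cite{Crandall71}) transfers the linear stability lost by $\bar u$ across $\lambda_1$ to the first bifurcating branch. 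Branches born at $\lambda_n$ for $n \geq 2$ inherit unstable directions from all the $P_{2j}$ with $j < n$ and so cannot be linearly stable. Locally around $\bar u = 0$ the map $G$ is smooth, so the Lyapunov--Schmidt machinery applies despite the global non-differentiability of $G$ on $H_0$ that motivated the generalized degree of Section~3.
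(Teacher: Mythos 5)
Your treatment of the existence of the bifurcation points is essentially the paper's own argument: restrict to the axisymmetric subspace, read off from (\ref{ubar-der}) that $P_{2n}(D,\cos\theta)$ is an eigenvector of $L$ with simple eigenvalue $k_n/N(D,2n)$, note that the index of $\bar u$ flips sign as $\lambda$ crosses each $\lambda_n=N(D,2n)k_n^{-1}$ (the paper asserts the sign change directly; your formula $(-1)^{m(\lambda)}$ makes it explicit and correctly uses the distinctness of the $\lambda_n$), and invoke Lemma \ref{bif-lem}. For the multiplicity and stability claims the paper simply cites Theorem 4.2 of \cite{Sattinger1973} at the simple eigenvalue (together with \cite{Nik15}), whereas you try to carry out the local Lyapunov--Schmidt analysis explicitly. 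That is where a genuine gap appears.

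The reduced bifurcation equation you write, $\lambda(s)=\lambda_1+c\,s^2+O(s^4)$, is the $D=2$ pitchfork template and is generically false for $D\geq 3$. Expanding $G$ about $\bar u=0$, the quadratic part is $Q(V)=\tfrac{1}{2\sigma_D}\int_{S^{D-1}}\hat K(\gamma)V(r')^2\,d\sigma(r')$, and its projection onto the kernel direction $P_2(D,\cos\theta)$ is proportional to the triple-product integral $\int_0^\pi P_2^3(D,\cos\theta)\sin^{D-2}\theta\,d\theta$. This integral vanishes when $D=2$ (where $\int_0^{2\pi}\cos^3 2\theta\,d\theta=0$), but for $D\geq 3$ it is nonzero (for $D=3$ it equals $4/35$). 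Hence $\lambda'(0)\neq 0$: the bifurcation at $\lambda_1$ is transcritical, not a pitchfork, and the chain ``$c>0\Rightarrow$ supercritical $\Rightarrow$ stable'' does not apply as stated --- along a transcritical branch one side is subcritical and, by the very exchange-of-stability principle you invoke, unstable. (You have in effect placed the triple product inside the coefficient of $s^2$, where it cannot go.) Separately, the assertion that the ratio formula (\ref{kn+1-kn}) ``ensures $c>0$'' is unsubstantiated: by analogy with the $D=2$ result quoted in the introduction, supercriticality requires an inequality of the type $2k_{2n}/k_n<1$, which does not follow from $k_{n+1}<k_n$ alone and would need a separate computation. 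To close the stability step you must either verify the hypotheses of Sattinger's theorem directly, as the paper does by citation, or redo the reduced equation with the correct quadratic term and determine stability from the sign of $\lambda'(s)$ on each local half-branch.
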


\begin{proof}
  Notice that $\lambda_n$ are the eigenvalues of the operator $L$ defined in
  (\ref{L}). According to the calculation (\ref{ubar-der}), the ${\rm ind} (
  \bar{u}, \lambda)$ changes the sign when $\lambda$ passes through
  $\lambda_n$, i.e., for sufficiently small $\varepsilon > 0$ and $\lambda \in
  (\lambda_n - \varepsilon, \lambda_n)$ and $\mu \in (\lambda_n, \lambda_n +
  \varepsilon)$ we have
  \begin{equation}
    {\rm ind} ( \bar{u}, \lambda) {\rm ind} ( \bar{u}, \mu) = - 1.
  \end{equation}
  Therefore, due to the lemma (\ref{bif-lem}), the values $\lambda_n$ are bifurcation points.
  Since $L$ is a self-adjoint operator, the algebraic and geometric
  multiplicity of eigenvalues of $L$ coincide. It is simply seen that the
  unique eigenfunction (up to the normalization) of $L$ at $\lambda_n$ is
  $P_{2 n} (D, \cos \nocomma \theta)$ and then $\lambda_n$ is a simple
  eigenvalue for $L$. Due to the Theorem 4.2 in \cite{Sattinger1973}, we
  conclude that there exist exactly two solutions bifurcating from the trivial solution $\bar{u}$ at all critical values 
  $\lambda_n$ and furthermore the first bifurcation solution at $\lambda_1$ is
  stable; see also \cite{Nik15}.
\end{proof}
\appendix
\section{Proof of the theorem \ref{appthm}}
Let $u$ be an arbitrary solution to $A(u)=0$, we have
  \begin{eqnarray*}
    \left\langle \frac{\partial}{\partial u_{n \nocomma j}} g (u), S_{2 m
    \nocomma l} (D, r) \right\rangle = - \lambda \int_{S^{D - 1}} \int_{S^{D -
    1}} \hat{K} (\gamma) f (r') S_{2 n \nocomma j} (D, r') \overline{S_{2 m
    \nocomma l} (D, \nocomma r)} d \nocomma \sigma (r') + &  & \\
    + \lambda \int_{S^{D - 1}} f (r) S_{2 n \nocomma j} (D, r)  \int_{S^{D -
    1}} \int_{S^{D - 1}} \hat{K} (\gamma) f (r') \overline{S_{2 m \nocomma l}
    (D, r)} d \sigma (r') = &  & \\
    = \frac{\sigma_D \lambda k_m}{N (D, 2 m)}  \int_{S^{D - 1}} f (r) S_{2 n
    \nocomma j} (D, r) \overline{S_{2 m \nocomma l} (D, r)} - &  & \\
    - \frac{\sigma_D \lambda k_m}{N (D, 2 m)}  \int_{S^{D - 1}} f (r) S_{2 n
    \nocomma j} (D, r)  \int_{S^{D - 1}} f (r) \overline{S_{2 m \nocomma l}
    (D, r)} . &  & 
  \end{eqnarray*}
  Let $b_{n  j}^{m  l}$ denote the following expression:
  \[ b_{n \nocomma j}^{m \nocomma l} = \int_{S^{D - 1}} f (r) S_{2 n \nocomma
     j} (D, r) \overline{S_{2 m \nocomma l}} (D, r) - \int_{S^{D - 1}} f (r)
     S_{2 n \nocomma j} (D, r)  \int_{S^{D - 1}} f (r) \overline{S_{2 m
     \nocomma l} (D, r)} . \]
  An estimate for $b_{n \nocomma j}^{m \nocomma l}$ (not necessarily optimal)
  using the a priori estimate (\ref{u-bnd}) is as follows:
  \begin{equation}
    |b_{n \nocomma j}^{m \nocomma l} | \leq \frac{2 e^{4 \lambda || K
    ||_{\infty}}}{\sigma_D} .
  \end{equation}
  This implies in turn
  \[ \frac{\partial}{\partial u_{n \nocomma j}} g (u) (r) = \sum_{m =
     1}^{\infty} \sum_{l = 1}^{N (D, 2 m)} a_{m \nocomma l} S_{2 m \nocomma l}
     (D, r), \]
  where $a_{m \nocomma l}$ has the following bound:
  \begin{equation}
    |a_{m \nocomma l} | \leq \frac{2 \lambda k_m e^{4 \lambda || K
    ||_{\infty}}}{N (D, 2 m)} .
  \end{equation}
  The following estimate gives a bound for which ${\rm ind} (u, \lambda) = +
  1$ for any possible solution of (\ref{ulG}):
  \begin{equation}
    \label{l-bnd} \lambda e^{4 \nocomma \lambda || K ||_{\infty}}  \sum_{m =
    0}^{\infty} \sum_{l = 1}^{N (D, 2 m)} \frac{k_m}{N (D, 2 m)} = \lambda
    e^{4 \nocomma \lambda || K ||_{\infty}}  \sum_{m = 1}^{\infty} k_m <
    \frac{1}{2} .
  \end{equation}
  Using the estimate (\ref{l-bnd}) we conclude that the index of every
  possible solution of (\ref{ulG}) is $+ 1$ for $0 < \lambda < \lambda_0$
  where
  \begin{equation}
    \label{l0-1} \lambda_0 = \frac{1}{5} \| \hat{K} \|_{\infty}^{- 1} .
  \end{equation}

\end{document}